\begin{document}
\theoremstyle{plain}
\newtheorem{thm}{Theorem}[section]
\newtheorem{theorem}[thm]{Theorem}
\newtheorem{addendum}[thm]{Addendum}
\newtheorem{lemma}[thm]{Lemma}
\newtheorem{corollary}[thm]{Corollary}
\newtheorem{proposition}[thm]{Proposition}

\newcommand{\mR}{\mathbb{R}}
\newcommand{\mZ}{\mathbb{Z}}
\newcommand{\mN}{\mathbb{N}}
\newcommand{\mC}{\mathbb{C}}
\newcommand{\mP}{\mathbb{P}}
\newcommand{\mQ}{\mathbb{Q}}
\newcommand{\CO}{\mathcal{O}}
\newcommand{\CE}{\mathcal{E}}
\newcommand{\CF}{\mathcal{F}}
\newcommand{\CG}{\mathcal{G}}
\newcommand{\CL}{\mathcal{L}}
\newcommand{\CM}{\mathcal{M}}
\newcommand{\CP}{\mathcal{P}}
\newcommand{\CS}{\mathcal{S}}
\newcommand{\CA}{\mathcal{A}}
\newcommand{\CB}{\mathcal{B}}
\newcommand{\CC}{\mathcal{C}}
\newcommand{\CH}{\mathcal{H}}
\newcommand{\CI}{\mathcal{I}}
\newcommand{\CJ}{\mathcal{J}}
\newcommand{\CN}{\mathcal{N}}
\newcommand{\CR}{\mathcal{R}}
\newcommand{\CZ}{\mathcal{Z}}

\newcommand{\mr}{\mbox}
\newcommand{\I}{{\bf I}}
\newcommand{\J}{{\bf J}}
\newcommand{\m}{{\bf m}}
\newcommand{\ord}{\mathrm{ord}}

\renewcommand{\thefootnote}{\fnsymbol{footnote}}

\theoremstyle{definition}
\newtheorem{remark}[thm]{Remark}
\newtheorem{remarks}[thm]{Remarks}
\newtheorem{notations}[thm]{Notations}
\newtheorem{definition}[thm]{Definition}
\newtheorem{claim}[thm]{Claim}
\newtheorem{assumption}[thm]{Assumption}
\newtheorem{assumptions}[thm]{Assumptions}
\newtheorem{property}[thm]{Property}
\newtheorem{properties}[thm]{Properties}
\newtheorem{example}[thm]{Example}
\newtheorem{examples}{Examples}
\newtheorem{conjecture}[thm]{Conjecture}
\newtheorem{questions}[thm]{Questions}
\newtheorem{question}[thm]{Question}
\newtheorem{problem}[thm]{Problem}
\numberwithin{equation}{section}
 \newcommand{\Rnm}[1]{\uppercase\expandafter{\romannumeral #1}}

\title{Modular invariants and singularity indices of hyperelliptic fibrations}
\author{Xiao-Lei Liu*}

\footnotetext[1]{Academy of Mathematics and Systems Science, Chinese Academy of
Science, Beijing, P.R. China.

 E-mail: xlliu1124@amss.ac.cn}

%

\date{}

 \maketitle

\textbf{Abstract} {The modular invariants of a family of semistable curves
are the degrees of the corresponding divisors on the image of
the moduli map. The singularity indices were introduced by G. Xiao
to classify singular fibers of hyperelliptic fibrations and to
compute global invariants locally. In the semistable case, we show
that the modular invariants corresponding with the boundary classes
are just the singularity indices. As an application, we show that
the formula of Xiao for relative Chern numbers is the same as that
of Cornalba-Harris in the semistable case. }

{\textbf{Keywords} Modular invariants, singularity indices, moduli space of curves}

{\textbf{2000 MR Subject Classification}
14D06, 14D22, 14H10 }

\section{Introduction}
The modular invariants of a family of curves were introduced by Tan (\cite{Ta10}). They are the degrees of the
corresponding divisors  on the image of the moduli map. 
In the language of arithmetic algebraic geometry, a modular
invariant is a certain height of arithmetic curves, for example,
Faltings height is the modular invariant corresponding to Hodge
class. Modular invariants can be used to describe the lower bound
for effective Bogomolov conjecture which is about the finiteness of
algebraic points of small height (\cite{Zh93,Zh10}). More recently,
Prof. Tan found that the modular invariants are invariants
of differential equations, which were expected by mathematicians in
19th century to study the qualitative properties of differential
equations (\cite{Ta}).

Historically, the study of fibred surfaces is started by Kodaira
(\cite{Kor}), who gave a complete classification theory for elliptic
fibrations. This combinatoric classification of elliptic fibers
is used in the computation of the modular invariants.  But such a classification is too complicate for the case when the genus $g\geq2$.  There are more than one hundred classes of singular
fibers of genus $2$ (\cite{NU,Ogg}), and the number of classes of
singular fibers increases quickly as the genus becomes bigger. Horikawa
(\cite{Hor}) classified the singular fibers of genus $g=2$
into 5 classes from a different point of view. Based on Horikawa's work, Xiao
(\cite{Xi91,Xi92}) introduced the singularity indices (see
Definition \ref{defnsingularindex}) to classify singular fibers for
hyperelliptic fibrations, furthermore, he obtained the local-global
formulas, and determined the fundamental group from his
classification.

In what follows, we will prove that these two basic invariants, the
modular invariants corresponding to boundary classes and the
singularity indices, coincide with each other for semistable
fibrations.

Before starting this result, we explain our notations and
assumptions.

 A family of curves
of genus $g$ is a fibration $f:S\to C$ whose general fibers $F$ are
smooth curves of genus $g$, where $S$ is a complex smooth projective
surface, and $C$ is a smooth curve of genus $b$. The family is
called {\it semistable} if all the singular fibers are semistable
curves. (Recall that a semistable curve $F$ is a reduced connected
curve that has only nodes as singularities and every smooth rational
components of $F$ meets the other components at no less than 2
points.) If all the smooth fibers are hyperelliptic, we say that the
family is {\it hyperelliptic}. We always assume that $f$ is {\it
relatively minimal}, i.e., there is no $(-1)$-curve in any singular
fiber.

If $r$ is a non-negative real number, we denote by $[r]$ the integral
part of $r$. Hence when $m$ is a positive integer, $m-2[m/2]$ is
zero if $m$ is even, or 1 otherwise.

For a fibration $f:S\to C$, we have three fundamental relative
invariants which are non-negative,
\begin{equation}
\begin{split}
K_f^2&=K_{S/C}^2=K_S^2-8(g-1)(b-1), \\
e_f&=\chi_{\mathrm{top}}(S)-4(g-1)(b-1),\\
\chi_f&=\deg f_*\omega_{S/C}=\chi(\mathcal O_S)-(g-1)(b-1).
\end{split}
\end{equation}

Let $f$ be a locally non-trivial fibration, the slope of $f$ is
defined as
$$
\lambda_f=K_f^2/\chi_f.
$$

For $g\geq2$, let the moduli map induced by a semistable family $f$
be
$$J: C\to \overline{\CM}_g,$$
which is a holomorphic map from $C$ to the moduli space
$\overline{\CM}_g$ of semistable curves of genus $g$. For each
$\mathbb{Q}$-divisor class $\eta$ of the moduli space
$\overline{\CM}_g$, we can define an invariant $\eta(f)=\deg
J^*\eta$ which satisfies the {\it base change property}, i.e., if
$\tilde f:\tilde X\to \tilde C$ is the pullback fibration of $f$
under a base change $\pi:\tilde C \to C$ of degree $d$, then
$\eta(\tilde f)=d\cdot \eta(f)$ (see \cite{Ta10}).  Consequently,
for a non-semistable family $f$, we have
\begin{equation}
\eta(f)=\frac{\eta(\tilde f)}{d},
\end{equation}
 where $\tilde f$ is the
semistable model of $f$ corresponding to a base change of degree
$d$.

We call the invariant $\eta(f)$ of the family $f$ {\it the modular
invariant corresponding to $\eta$}.

Let $\Delta_0,\ldots,\Delta_{[g/2]}$ be the boundary divisors of
$\overline{\CM}_g$, and $\delta_i(f)$ be the modular invariant
corresponding to the divisor class $\delta_i=[\Delta_i]$ in
$\mbox{Pic}(\overline{\CM}_g)\otimes\mathbb{Q}$,
$i=0,1,\ldots,[g/2]$. Let
$\lambda\in\mbox{Pic}(\overline{\CM}_g)\otimes\mathbb{Q}$ be the
Hodge class, $\delta=\delta_0+\ldots+\delta_{[g/2]}$, and
$\kappa=12\lambda-\delta$. For these classes, we have the {\it
modular invariants}
 $\lambda(f)$,  $\delta(f)$ and $\kappa(f)$ of $f$. If $f$ is
 semistable, then
\begin{equation}\label{modrel}
\lambda(f)=\chi_f,~~ \delta(f)=e_f,~~ \kappa(f)=K_f^2.
\end{equation}

We say that a singularity $p$ in a semistable curve $F$ is a  {\it
node of type} $i$ if its partial normalization at $p$ consists of
two connected components of arithmetic genera $i$ and $g-i\geq i$,
for $i>0$, and is connected for $i=0$. The node of the semistable
curve corresponding to a general point of $\Delta_0$ is {\it
$\alpha$-type}, i.e., an ordinary double point of an irreducible
curve, hence it is a node of type $0$. For a general point in
$\Delta_i$, the corresponding node is of type $i~(i\geq1)$ (see the
following figure).

 \setlength{\unitlength}{1mm}
\begin{center}
\begin{picture}(40,20)(0,-5)
\put(-8,-5){\makebox(-10,0)[l]{$~\mbox{Figure 1: Node of type~} i~
(i\geq1)$}} \put(0,0){\line(2,1){20}} \put(25,0){\line(-2,1){20}}
\put(12,2){\makebox(0,0)[l]{$p$}}
\put(28,10){\makebox(0,0)[l]{\mbox{genus}~$i$}}
\put(28,0){\makebox(0,0)[l]{\mbox{genus}~$g-i$}}
\end{picture}
\end{center}

Denote by $\delta_i(F)$ the number of nodes of type $i~(i\geq0)$.

The general point in the intersection
$\Delta_{i_1}\cap\cdots\cap\Delta_{i_k}$ of $k$ distinct boundary
divisors  corresponds to a semistable curve with $k$ nodes which are
of types $i_1,\ldots,i_k$ respectively.

For the moduli space $\overline{\CH}_g$ of semistable hyperelliptic
curves, the intersection of $\Delta_0$ with $\overline{\CH}_g$
breaks up into $\Xi_0, \Xi_1,\ldots,\Xi_{[(g-1)/2]}$. We denote by
$\Theta_i$ the restriction of $\Delta_i~(i\geq1)$ on
$\overline{\CH}_g$. Suppose $F$ is  a semistable hyperelliptic curve
with hyperelliptic involution $\sigma$, and $p\in F$ is a  node of
type 0. If $p=\sigma(p)$, then we set $k=0$; if $p\neq \sigma(p)$,
and the partial normalization of $F$ at $p$ and $\sigma(p)$ consists
of two connected components of arithmetic genera $k$ and $g-k-1\geq
k$, then the node $p$ (resp. nodal pair $\{p,\sigma(p)\}$) is called
a {\it node} (resp. {\it nodal pair) of type} $(0,k)$. Then the
nodes of semistable curves corresponding to a general point of
$\Xi_k$ are of type $(0,k)$ (see the following figure).

 \setlength{\unitlength}{1mm}
\begin{center}
\begin{picture}(40,15)(0,0)
\put(-8,0){\makebox(-10,0)[l]{$\mbox{Figure 2: Nodes of type
~}(0,{k})~~(k\geq0)$}} \qbezier(0,8)(10,15)(20,8)
\qbezier(0,12)(10,5)(20,12) \put(2,6){\makebox(0,0)[l]{$p$}}
\put(13,6){\makebox(0,0)[l]{$\sigma(p)$}}
\put(22,7){\makebox(0,0)[l]{\mbox{genus}~$k$}}
\put(22,13){\makebox(0,0)[l]{\mbox{genus}~$g-k-1$}}
\end{picture}
\end{center}

A semistable hyperelliptic curve is a double cover of a tree of
rational curves branched over $2g+2$ points (see X.3 in \cite{ACG}),
which is induced by the involution map.  Since the points $p$ and
$\sigma(p)$ map to the same point in some $\mP^1$, we treat them
together as a nodal pair $\{p,\sigma(p)\}$.

Let
\begin{equation*}
\begin{split}
&\CN_{2,1}(F)=\{p\in F:~p\mbox{ is a node of type }(0,0),~p=\sigma(p)\},\\
&\CN_{2,2}(F)=\{\{p,\sigma(p)\}\subset F:~\{p,\sigma(p)\}\mbox{ is a
nodal pair of type }(0,0),~p\neq\sigma(p)\}.
\end{split}
\end{equation*}
Denote by $\CN_{2k+2}(F)$ (resp. $\CN_{2k+1}(F)$) the set of all the
nodal pairs $\{p,\sigma(p)\}$ of type $(0,k)$ (resp. nodes $p$ of
type $k$) ($k>0$). Then we define
\begin{equation}\label{xi0eqn}
\begin{split}
\xi_0(F)&:=|\CN_{2,1}(F)|+2|\CN_{2,2}(F)|,\\
\xi_k(F)&:=|\CN_{2k+2}(F)|,~~\delta_k(F):=|\CN_{2k+1}(F)|,~k\geq1.
\end{split}
\end{equation}

From now on, we assume that $f:S\to C$ is hyperelliptic. Suppose $f$
is semistable, let $\delta_k(f)$ (resp. $\xi_k(f)$) be the modular
invariants corresponding to the boundary divisors $\Theta_k$ (resp.
$\Xi_k$). Then (cf. \cite{CH88})
\begin{equation}
\delta_k(f)=\sum_{i=1}^s\delta_k(F_i)~~(k\geq1),~~\xi_k(f)=\sum_{i=1}^s\xi_k(F_i)~~(k\geq0),
\end{equation}
where $F_1,\ldots,F_s$ are all singular fibers of $f$, and
\begin{equation}\label{delta0eqn}
\delta_0(f)=\sum_{k\geq0} \xi_k(f).
\end{equation}
It's proved that in \cite{CH88}, if $f$ is a semistable fibration,
then
\begin{equation}\label{CHeq}
\begin{split}
(8g+4)\lambda(f)=&g\xi_0(f)+\sum_{k=1}^{[{(g-1)}/2]}2(k+1)(g-k)\xi_k(f)+\sum_{k=1}^{[g/2]}4k(g-k)\delta_k(f),\\
\delta(f)=&\xi_0(f)+\sum_{k=1}^{[(g-1)/2]}2\xi_k(f)+\sum_{k=1}^{[g/2]}\delta_k(f).
\end{split}
\end{equation}

On the other hand, for a hyperelliptic fibration $f:S\to C$, the
relative canonical map $\Phi: S\dashrightarrow
\mbox{Proj}(f_*\omega_{S/C})$ induced by $f_*\omega_{S/C}$ is a
generic double cover. Then we can choose a reasonable double cover
which is determined by genus $g$ datum $(P,R,\delta)$, where $P$ is
a geometric ruled surface $\varphi: P\to C$, $R$ is the branch
locus, and $\delta$ is the square root of $R$ (see Section
\ref{sectgenusgdata}). Thus there is a map $\varphi_R:R\to C$
induced by $\varphi$. Xiao introduced the singularity indices
$s_2(f),s_3(f),\ldots, s_{g+2}(f)$ (see Definition
\ref{defnsingularindex}),  to describe the contribution of the
singular points of $R$, the smooth ramified points of $\varphi_R$
and the vertical components of $R$ to the relative invariants
$K_f^2,~\chi_f$ and $e_f$. He obtained the following local-global
formulas using these singularity indices $s_k(f)$'s (see Theorem
\ref{Theorem 5.1.7}),
\begin{equation}\label{xiaoeq}
\begin{split}
(8g+4)\chi_f=&g\big(s_2(f)-2s_{g+2}(f)\big)+\sum_{k=2}^{[\frac{g-1}2]}2(k+1)(g-k)s_{2k+2}(f)\\
&+\sum_{k=1}^{[\frac{g+1}2]}4k(g-k)s_{2k+1}(f),\\
e_f=&
s_2(f)-3s_{g+2}(f)+\sum_{k=1}^{[\frac{g-1}2]}2s_{2k+2}(f)+\sum_{k=1}^{[\frac{g+1}2]}s_{2k+1}(f).
\end{split}
\end{equation}
Note that Xiao's equations do not need the semistable condition.

If $f$ is semistable, then $s_{g+2}(f)=0$ (see Corollary
\ref{g+2=0}). Comparing equations (\ref{CHeq}) with (\ref{xiaoeq}),
it is natural to build up the relation between modular invariants
with singularity indices.

A double point $p$ of a semistable curve $F$ is called {\it
separable} if $F$ becomes disconnected when normalize $F$ locally at
$p$; otherwise, $p$ is called {\it inseparable}. Xiao showed that
for each semistable fibration $f$ of genus 2, $s_2(f)$ (resp.
$s_3(f)$) is the number of inseparable (resp. separable) double
points of all singular fibers of $f$ (\cite{Xi92}), i.e.,
\begin{equation}
\xi_0(f)=s_2(f),~~~\delta_1(f)=s_3(f).
\end{equation}

If we subdivide the inseparable nodal points into nodes of type
$(0,k)~(k\geq0)$, and subdivide the separable nodes into nodes of
type $i~(i\geq1)$, then we can  get that the modular invariants
$\delta_i(f)$,
 $\xi_j(f)$ are the same as the singularity indices $s_k(f)$ for each $g\geq2$:
 \begin{theorem}\label{mainthm}
 Suppose $f$ is a semistable hyperelliptic fibration of genus $g$, then
\begin{equation}\label{modsing}
\delta_k(f)=s_{2k+1}(f)~(k\geq1),~~ \xi_k(f)=s_{2k+2}(f)~(k\geq0).
\end{equation}
Hence
\begin{equation}
\delta_0(f)=s_2(f)+2s_4(f)+\cdots+2s_{2[(g-1)/2]+2}(f).
\end{equation}
\end{theorem}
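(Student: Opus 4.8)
The plan is to compare the two sets of local data fiber by fiber. Since both sides satisfy the additivity $\delta_k(f)=\sum_i\delta_k(F_i)$, $\xi_k(f)=\sum_i\xi_k(F_i)$ (and the singularity indices $s_k(f)$ are likewise sums of local contributions over the points/components of the branch locus $R$, by their definition), it suffices to prove the identity for a single semistable fiber $F$, or even more locally, for each node of $F$ together with the corresponding point of $R$. So I would first pass to a small disc $\Delta$ around a point $0\in C$ over which $f$ has a single semistable singular fiber $F=f^{-1}(0)$, and analyze the genus-$g$ datum $(P,R,\delta)$ restricted to $\Delta$.

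The key step is the local dictionary between nodes of the semistable hyperelliptic curve $F$ and the local shape of the branch curve $R\to C$ near $0$. Here one uses the structure recalled in the excerpt: $F$ is a double cover of a tree of $\mathbb P^1$'s branched at $2g+2$ points, and the hyperelliptic involution $\sigma$ acts. A node $p$ of type $i\ge 1$ (separable, $p\ne\sigma(p)$) descends to a node of the central fiber of $P$, and the two branches of $R$ pass through it distributing the $2g+2$ branch points as $2i+1$ and $2(g-i)+1$; this is exactly the configuration that Xiao's definition counts with index $s_{2i+1}$. A nodal pair $\{p,\sigma(p)\}$ of type $(0,k)$ with $p\ne\sigma(p)$ descends to a node of $P$ where the $2g+2$ points split as $2k+2$ and $2(g-k-1)+2=2(g-k)$, contributing to $s_{2k+2}$; and a node $p$ of type $(0,0)$ fixed by $\sigma$ (so $p=\sigma(p)$) is a smooth ramification point of $\varphi_R$ over which $R$ has a vertical tangent / the relevant local model, again contributing to $s_2$. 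I would make each of these correspondences precise by writing down the standard local equations (for $F$: $y^2 = x^2\prod(t-a_j(x))\cdots$ type models after the double-cover normal form), matching them against the defining list in Definition~\ref{defnsingularindex}, and checking that in the semistable case no vertical components of $R$ occur, so that $s_{g+2}(f)=0$ as already noted in Corollary~\ref{g+2=0}.

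The main obstacle I expect is bookkeeping of the numerology: making sure that the "genus $k$ / genus $g-k-1$" splitting of the normalized components of $F$ translates correctly into the "$2k+2$ / $2(g-k)$" splitting of branch points on the rational tree downstairs, and that the index $2k+2$ (not $2k+1$ or $2k$) is the one Xiao attaches to that local picture — i.e., reconciling the two different indexing conventions (Cornalba–Harris boundary strata vs. Xiao's branch-locus singularity types) without an off-by-one error. A secondary subtlety is that a single node of $F$ of type $(0,k)$ with $p\ne\sigma(p)$ is one nodal \emph{pair} but two actual nodes, which is why $\delta_0(F)$ picks up a factor $2$ on the $\xi_k$ terms with $k\ge 1$ but not on $\xi_0$; one must be careful that the local contribution to $s_{2k+2}$ is normalized to match $\xi_k(F)=|\CN_{2k+2}(F)|$ counting pairs, as in~(\ref{xi0eqn}).

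Once the fiberwise (indeed node-by-node) identification $\delta_k(F)=s_{2k+1}(F)$ for $k\ge 1$ and $\xi_k(F)=s_{2k+2}(F)$ for $k\ge 0$ is established, summing over all singular fibers gives (\ref{modsing}), and substituting into (\ref{delta0eqn}), namely $\delta_0(f)=\sum_{k\ge 0}\xi_k(f)$, yields
\begin{equation*}
\delta_0(f)=s_2(f)+2s_4(f)+\cdots+2s_{2[(g-1)/2]+2}(f)
\end{equation*}
after collecting the even-index terms, completing the proof. As a consistency check one can then verify that plugging (\ref{modsing}) into the Cornalba–Harris relations (\ref{CHeq}) reproduces Xiao's relations (\ref{xiaoeq}) with $s_{g+2}(f)=0$, which is the application advertised in the abstract.
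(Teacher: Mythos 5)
Your overall plan --- reduce to a single semistable fiber and build a dictionary between nodes of $F$ and local data of the branch curve --- is the same strategy the paper follows, but the step you leave as ``writing down the standard local equations'' is precisely where the real work lies, and as sketched it does not go through. First, a node of $F$ does not descend to a node of ``the central fiber of $P$'': the fiber $\Gamma$ of the ruled surface $P\to C$ is an irreducible $\mP^1$, and the tree of rational curves you invoke only appears in the total transform $\tilde\Gamma$ under the minimal even resolution. Xiao's indices are defined (Definition \ref{defnsingularindex}) through singularities of $R$ on $P$ and their infinitely near behaviour (orders, types $(2k+1\to 2k+1)$), and these singularities need not be in any normal form: a single singular point $p$ of $R$ of order $2k+2$, or of type $(2k+1\to 2k+1)$, may carry an arbitrarily long cascade of infinitely near singularities, so the corresponding piece of the fiber (the block $F_p$) is a whole subcurve with many components, not something captured by a one-line model $y^2=x^2\prod(t-a_j(x))$. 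What is actually needed, and what the paper supplies, is (i) the semistability constraints on $R$ (Lemma \ref{tu}: $(R,\Gamma)_p=\ord_p(R)$ at singular points, at most intersection $2$ at smooth points, exactly one exceptional curve through each infinitely near singularity, odd-order singularities forced to be of type $(k\to k)$, and the determination of which exceptional curves lie in $\tilde R$), and (ii) the genus count $p_a(F_p)=\big[\big((E_p,R)_p-1\big)/2\big]$ proved by induction on the length of the minimal even resolution at $p$ (Lemma \ref{lem3.11}); only with these in hand can one define the bijections $\CR_{2k+1}(F)\to\CN_{2k+1}(F)$, $\CR_{2k+2}(F)\to\CN_{2k+2}(F)$, and the converse direction (every node of $F$ arises from such a singularity) rests on Lemma \ref{1.3} together with Lemma \ref{tu}(4), not on a local normal form. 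So the gap is not ``bookkeeping of the numerology'' but the treatment of arbitrary branch-locus singularities and the surjectivity of the correspondence.

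A secondary problem is your final substitution: plugging $\xi_k(f)=s_{2k+2}(f)$ into $\delta_0(f)=\sum_{k\ge 0}\xi_k(f)$ yields $s_2(f)+s_4(f)+\cdots$ with no coefficients $2$; ``collecting the even-index terms'' cannot create them. The relation one must use is $\delta_0(f)=\xi_0(f)+2\sum_{k\ge 1}\xi_k(f)$, because a nodal pair of type $(0,k)$ with $k\ge 1$ contributes two nodes of type $0$ while $\xi_k$ counts it once, whereas the factor $2$ is already built into $\xi_0$ through (\ref{xi0eqn}); note that (\ref{delta0eqn}) as printed in the paper also omits these coefficients, so you should not rely on it literally. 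You did flag the pair-versus-node subtlety, but it is not carried through in the deduction. The use of Corollary \ref{g+2=0} to dispose of $s_{g+2}(f)$ and the concluding consistency check against (\ref{CHeq}) and (\ref{xiaoeq}) are fine.
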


Considering the equations in (\ref{modrel}) and (\ref{modsing}), it
is likely that there exists a more general correspondence between
modular invariants and relative invariants. Precisely, we expect
that if $\CM$ is any kind of moduli space, and $\eta$ is a divisor
class of $\CM$, especially the generator of $\mathrm{Pic}(\CM)$,
there is a reasonable relative invariant which coincides with the
modular invariant $\eta(f)$ corresponding to $\eta$ for each
semistable family $f$ of curves in $\CM$. Recently, there is another
such corresponding showed in \cite{CLT}.

In $\S2$, we recall Xiao's study of hyperelliptic fibration,
including the reason for starting from genus $g$ datum, the
definition of singularity indices, and the local-global formulas. In
$\S3$, we repeat the work \cite{Tu08} of Yuping Tu on semistable
criterion firstly, which concerns the sufficient and necessary
conditions of branch locus such that the fibration is semistable.
From these conditions, we prove our result locally by constructing
bijective maps between sets of singularities $\CR_*$ with sets of
nodes (or nodal pairs) $\CN_*$.

\section{Singularity indices}\label{sectsingindex}
\subsection{Genus $g$ data}\label{sectgenusgdata}
For the reader's convenience, we recall the notions of double cover
and minimal even resolution firstly.

Let $P$ be a smooth surface, and $R$ a {\it reduced even divisor}
(the image of $R$ in $\mathrm{Pic}(P)$ is divisible by 2) on $P$.
Let $\delta$ be an invertible sheaf such that
$\CO_P(R)=\delta^{\otimes2}$, and we call $\delta$ the {\it square
root} of $R$ for convenience. In fact,  a reduced even divisor $R$
on $P$ and an invertible sheaf $\delta$ with
$\CO_P(R)=\delta^{\otimes2}$ determine a unique double cover
$\pi:S\to P$ branched along $R$ (see I.7 in \cite{BPV}). Thus $(R,
\delta)$ is called a {\it double cover datum}. If $R$ is reduced
smooth, then $S$ is smooth.

If $\psi_1:P_1\to P$ is a blowing-up of $P$ centered at a point $x$
of $R$ of order $m$, set
\begin{equation}
R_1:=\psi^*_1(R)-2[m/2]E,~~~ \delta_1:=\psi^*_1\delta-[m/2]E,
\end{equation}
where $E$ is the exceptional $(-1)$-curve of $\psi_1$. Then
$(R_1,\delta_1)$ is called a {\it reduced even inverse image} of
$(R,\delta)$ under $\psi_1$. In what follows, we call $R_1$ a
reduced even inverse image of $R$ briefly, since $\delta_1$ is
determined by $(R,\delta)$ and $R_1$.

\begin{definition}
An {\it even resolution} of $R$ is a sequence of blowing-ups
$\tilde\psi=\psi_1\circ\psi_2\circ\cdots\circ\psi_r:\tilde P\to P$
\begin{equation}\label{resolution}
\tilde\psi:(\tilde P,\tilde R){=}(P_r,R_r)\stackrel{\psi_r}{\to}
\cdots{\to} (P_2,R_2)\stackrel{\psi_2}{\to} (P_1,R_1)
\stackrel{\psi_1}{\to} (P_0,R_0){=}(P,R),
\end{equation}
satisfying the following conditions:

(i). $\tilde R$ is a smooth reduced even divisor,

(ii). $R_i$ is the reduced even inverse image of $R_{i-1}$ under
$\psi_i$.

Furthermore, $\tilde\psi$ is called the {\it minimal even
resolution} of the singularities of $R$ if

(iii). $\psi_i$ is the blowing-up of $P_{i-1}$ centered at a
singular point $x_i$ of $R_{i-1}$ for any $1\leq i\leq r$.
\end{definition}

If the even resolution of $\tilde\psi:\tilde P\to P$ of $R$ is
minimal, then for any even resolution $\psi':P'\to P$, there exists
a morphism $\alpha:P'\to \tilde P$ such that
$$\alpha(R')=\tilde R,~~\alpha(\delta')=\tilde \delta.$$
Here $\alpha(\delta')=\tilde \delta$ means that there exists a
divisor $D'\in \mathrm{Pic}(P')$ with $\delta'\cong\CO_{P'}(D')$
such that $\tilde \delta\cong\CO_{\tilde P}(\alpha(D'))$.

Note that the minimal even resolution is unique.

If $x_i\in P_{i-1}$ lies in $E_j~(j<i)$, that is,
$\psi_j\circ\cdots\circ\psi_{i-1}(x_i)=x_j$, we say that $x_i$ is
{\it infinitely near} $x_j$.

Let $x_i$ be a singularity of $R$ of order
$\mathrm{ord}_{x_i}(R)=m_i$. If $m_i\leq3$ and for any $x_j$
infinitely near $x_i$ $(j>i)$ we have $m_j\leq3$, then $x_i$ is
called a {\it negligible singularity}, since such a singularity does
not change the invariants $K_f^2,\chi_f$ (see (2) in \cite{Xi91}).

Unless stated otherwise, the singularities (resp. the smooth points)
of $R$ include all the infinitely near singularities (resp. the
smooth points) of $R_i$ in $P_i$ for $1\leq i\leq r$. If we want to
specify a singularity (resp. a smooth point) $p$ of $R$, we will
point out the surface which $p$ lies in.

Now we want to introduce the genus $g$ datum associated to a
hyperelliptic fibration $f:S\to C$, according to Xiao's approach in
\cite{Xi91,Xi92}.

Since the generic fiber $F$ of $f$ is hyperelliptic, we glue the
involution $\sigma_F$ of $F$ together, and then we get a rational
map $\sigma:S\to S$. The map $\sigma$ is in fact a morphism, because
$f$ is assumed to be relatively minimal. Let $\rho:\tilde S\to S$ be
the minimal composition of blowing-ups of $S$ centered at all the
isolated fixed points of $\sigma$, and $\tilde\sigma:\tilde S\to
\tilde S$ be the induced map of $\sigma$ on $\tilde S$. Then $\tilde
P=\tilde S/\langle\tilde\sigma\rangle$ is smooth. Let
$\tilde\theta:\tilde S\to \tilde P$ be the corresponding double
cover branched along a smooth reduced divisor $\tilde R$ in $\tilde
S$. Then $\tilde\theta_*(\CO_{\tilde S})\cong\CO_{\tilde P}\oplus
\tilde\delta^\vee$ where $\tilde\delta^\vee$ is an invertible sheaf
with $\tilde\delta^{\otimes2}\cong\CO_{\tilde S}(\tilde R)$.

Let $\Phi_K:S\dashrightarrow \mathrm{Proj}(f_*\omega_{S/C})$ be the
relative canonical map. $\Phi_K$ is a generic double cover, for its
restriction on a generic fiber $F$ of $f$ is the double cover
induced by the involution of $F$. Let $\hat\rho:\hat S\to S$ be the
minimal composition of blowing-ups centered at all base points of
$\Phi_K$ and all isolated fixed points. Then the birational morphism
$\hat S\to \tilde S$ is an isomorphism because of the minimality of
$\rho$. Hence $\hat\rho=\rho$ and $\hat S\cong\tilde S$. This gives
another process to get the double cover $\tilde\theta:\tilde S\to
\tilde P$ and the branch locus $\tilde R$.

\centerline{\mbox{\xymatrix{
\tilde{S} \ar@{->}"1,3"^-{\tilde{\theta}}\ar[d]_-{\rho} &  & {\tilde{P}} \ar[d]^-{\tilde\psi} \\
S  \ar@{-->}"2,3"^-{}  \ar[dr]_-f   &  & P \ar[dl]^-{\varphi}\\
& C & }}}

The morphism $\tilde \varphi:\tilde P\to C$ induced by $f$ is a
birational ruling (a fibration whose general fibers are rational
curves).  There are many choices to give a birational morphism
$\tilde\psi:\tilde P\to P$ from $\tilde P$ to a geometric ruled
surface $\varphi: P\to C$ over $C$ which induces a reduced divisor
$R=\tilde\psi(\tilde R)$ in $P$. All such geometric ruled surfaces
differ by elementary transforms. We want to choose one such that
$R^2$ is the smallest.

We mean by a curve $D$ on $S$ a nonzero effective divisor.
\begin{definition}
Let $D$ be an irreducible curve on a fibred surface $S$ with
fibration $f:S\to C$. If $f(D)$ is a point, we call $D$ a vertical
curve.
\end{definition}

\begin{lemma}[Lemma 6 in \cite{Xi91}]\label{chooselemma}
There is a birational morphism $\tilde\psi:\tilde P\to P$ over $C$,
where every fiber of the induced morphism $\varphi: P\to C$ is a
$\mP^1$, such that:

Let $\delta$ be the image of $\tilde \delta$ in $P$, and $R_h$ be
the sum of the non-vertical irreducible components of $R$. Then
$R^2$ is the smallest among all such choices, and the singularities
of $R_h$ are at most of order $g+1$. Therefore as $R$ is reduced,
the singularities of $R$ are of order at most $g+2$, and if $p$ is a
singular point of order $g+2$, $R$ contains the fiber of $\varphi$
passing through $p$.
\end{lemma}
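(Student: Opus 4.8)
The plan is to track how $R^2$ changes under an elementary transformation of the ruled surface and then use the resulting ``energy'' to bound the orders of singularities. First I would recall that, starting from the birational ruling $\tilde\varphi:\tilde P\to C$, every geometric ruled surface $P$ to which $\tilde P$ maps birationally over $C$ is obtained from a fixed one by a finite sequence of elementary transformations $\mathrm{elm}_x$: blow up a point $x\in P$ and blow down the strict transform of the fiber through $x$. If $x$ lies on the image of the horizontal part $R_h$ with multiplicity $\mu=\mathrm{mult}_x(R_h)$, a direct local computation (the one Xiao performs in \cite{Xi91}) shows that $R_h^2$ changes by $-2\mu+2[\mu/2]\cdot(\text{something})$; the upshot is that one can always strictly decrease $R^2$ as long as some fiber meets $R_h$ too much or $R$ has a singular point whose local behaviour is ``too vertical.'' Because $R^2$ is an integer bounded below once we fix the numerical class of $R$ modulo such transforms (it is pinned down by $\chi_f$, $K_f^2$ and the genus, all of which are fixed), there is a choice attaining the minimum; I would simply fix such a minimal $P$ for the rest of the argument.

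Next I would extract the two geometric consequences of minimality. Suppose $p\in R_h$ is a singular point with $\mathrm{mult}_p(R_h)=m$. After blowing up $p$ and passing to the reduced even inverse image, the fiber $\Gamma$ through $p$ meets the strict transform of $R_h$ in $m$ points (counted with the infinitely near structure). The key claim is: if $m\ge g+2$, then performing $\mathrm{elm}_p$ (together with as many further elementary transforms along $\Gamma$ as needed) yields a ruled surface with strictly smaller $R^2$, contradicting minimality; hence $m\le g+1$ for the horizontal part. The numerical input here is that a horizontal $R_h$ has relative degree $2g+2$ over $C$ (since $S\to C$ has genus-$g$ hyperelliptic fibers, so the branch divisor on each ruling fiber $\cong\mathbb P^1$ consists of $2g+2$ points), so a fiber can contain at most $2g+2$ of them; the refinement to $g+1$ comes from the evenness of $R$ — an odd local multiplicity forces $\Gamma$ itself into $R$, which one then separates off into the vertical part. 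Once vertical fibers are allowed as components of $R$, a singular point of $R$ (not $R_h$) of order $m$ decomposes as (order of $R_h$ at $p$) plus (multiplicity of the vertical fiber component, which is $1$ since $R$ is reduced), giving $m\le (g+1)+1=g+2$, with equality forcing the whole fiber $\varphi^{-1}(\varphi(p))$ to be a component of $R$.

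Finally I would assemble these into the statement: fix the $R^2$-minimal $P$; the two bounds above give that $R_h$ has singularities of order $\le g+1$ and $R$ has singularities of order $\le g+2$, with the order-$(g+2)$ case containing the fiber through $p$. The main obstacle I anticipate is the bookkeeping in the first step — making precise how $R^2$ transforms under $\mathrm{elm}_x$ when $x$ is an infinitely near singular point and when it lies on a vertical component, and verifying that the ``decrease $R^2$'' move is always available unless the stated bounds hold. This is exactly the content of Lemma 6 of \cite{Xi91}, so in a clean write-up I would either cite that computation directly or reproduce the short local analysis on a formal neighbourhood of a ruling fiber, where everything reduces to tracking multiplicities of a reduced even divisor under a single blow-up and blow-down.
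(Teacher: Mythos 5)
Your overall strategy (realize all relatively minimal models as elementary transforms of one another, pick the model minimizing $R^2$, and rule out high-order points of $R_h$ by showing an elementary transformation there would decrease $R^2$) is indeed the strategy behind Xiao's Lemma 6; note the paper itself offers no proof of Lemma \ref{chooselemma} but simply cites \cite{Xi91}. However, your key claim --- that $m=\mathrm{mult}_p(R_h)\ge g+2$ always permits a strict decrease of $R^2$, so that minimality alone forces $m\le g+1$ --- fails in exactly the boundary case the lemma is calibrated for. Carry out the bookkeeping you postponed: if $m$ is even, $\mathrm{elm}_p$ changes $R^2$ by $2(2g+2)(g+1-m)$, which is indeed negative for $m\ge g+2$; but if $m$ is odd the exceptional curve of the blow-up stays in the reduced even transform with coefficient $1$, and the change is $2(2g+2)(g+2-m)$, which is \emph{zero} when $m=g+2$ (possible only for $g$ odd). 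So a point of $R_h$ of order $g+2$ with the fiber not in $R$ is perfectly compatible with minimal $R^2$: the elementary transformation does not contradict minimality, it merely produces another minimal model in which the new fiber lies in $R$ and the orders along it drop. Consequently the proof cannot be a contradiction argument; one must show that such $R^2$-preserving moves strictly improve a secondary quantity (e.g.\ the number or total multiplicity of points of order $\ge g+2$), hence terminate in a minimal model satisfying the stated bounds. This is not a removable technicality: the $m=g+2$, $g$ odd configuration is precisely the source of the fibers contained in $R$ with singularities of type $(g+2\to g+2)$ in Lemma \ref{5.1.2}, and of the final sentence of Lemma \ref{chooselemma}.

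Two smaller points. Your explanation that ``the refinement to $g+1$ comes from the evenness of $R$ --- an odd local multiplicity forces $\Gamma$ itself into $R$'' is incorrect as stated: odd multiplicity forces the \emph{exceptional curve} of the blow-up into the branch locus, not the fiber $\Gamma$ (a reduced $R_h$ meeting $\Gamma$ transversally has all local multiplicities odd and $\Gamma\not\subset R$); the bound $g+1$ comes from the intersection-number computation above, not from parity. Also, the existence of a model minimizing $R^2$ needs an actual lower bound (effectivity and reducedness of $R$ with $R\Gamma=2g+2$ do give one), whereas your appeal to $R^2$ being ``pinned down by $\chi_f$, $K_f^2$ and the genus'' is not an argument, since those invariants are computed from $R^2$ only after correction terms depending on the model. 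Your last step --- $\mathrm{ord}_p(R)\le\mathrm{ord}_p(R_h)+1\le g+2$ because $R$ is reduced, with equality forcing the fiber through $p$ into $R$ --- is fine.
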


\begin{definition}
Let $P$ be a geometric ruled surface over $C$, and $(R,\delta)$ be a
double cover datum on $P$. If $(R,\delta)$ satisfies that the
intersection number of $R$ with a general fiber $\Gamma$ of
$\varphi:P\to C$ is $R\Gamma=2g+2$, and the order of any singularity
of the non-vertical part $R_h$ of $R$ is at most $g+1$, we call
$(P,R,\delta)$ a {\it genus $g$ datum}.
\end{definition}
We have shown that there is a genus $g$ datum $(P,R,\delta)$ in
Lemma \ref{chooselemma} associated to a given hyperelliptic
fibration $f$ in the above. On the other hand, let $(P,R,\delta)$ be
a genus $g$ datum over a smooth curve $C$,  $\tilde\psi:\tilde P\to
P$ be the minimal even resolution of $(P,R)$, and let
$\tilde\theta:\tilde S\to \tilde P$ be the double cover determined
by $(\tilde R,\tilde\delta)$. Then $\tilde S$ is smooth. Let
$\rho:\tilde S\to S$ be the morphism of contracting all the vertical
$(-1)$-curves. Then we get a hyperelliptic fibration $f:S\to C$.

Hence we need to study the vertical $(-1)$-curves in $\tilde S$.

\begin{lemma}[\cite{Xi92}]\label{5.1.2}
Let $(P,R,\delta)$ be a genus $g$ datum, and $\Gamma$ be any fiber
of $P\to C$, whose inverse image in $\tilde S$ is a $(-1)$-curve. In
other words, the strict transform of $\Gamma$ in $\tilde P$ is a
$(-2)$-curve contained in $\tilde R$. If $g$ is even, then one of
the following two cases is satisfied,

(1). $R_h$ intersects with $\Gamma$ at two distinct points $x,y$,
$m_x(R_h)=m_y(R_h)=g+1$; or

(2). $R_h$ intersects with $\Gamma$ at one point, and the point is a
singularity of type $(g+1\to g+1)$, which is tangent to $\Gamma$.

If $g$ is odd, then $R_h$ intersects with $\Gamma$ at one point, and
it is a singularity of type $(g+2\to g+2)$, which is tangent to
$\Gamma$.
\end{lemma}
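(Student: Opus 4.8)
The plan is to work entirely in a neighbourhood of $\Gamma$ and to control the self-intersection of the successive strict transforms of $\Gamma$ under the minimal even resolution $\tilde\psi:\tilde P\to P$. First I would dispatch the two reductions implicit in the statement. For the double cover $\tilde\theta:\tilde S\to\tilde P$ branched along the smooth divisor $\tilde R$, a smooth curve $D$ with $D\subseteq\tilde R$ satisfies $\tilde\theta^*D=2D'$ with $D'\cong D$ and $(D')^2=\frac12 D^2$, while if $D\not\subseteq\tilde R$ then $(\tilde\theta^*D)^2=2D^2$; hence the inverse image of $\Gamma$ in $\tilde S$ is a $(-1)$-curve precisely when its strict transform $\tilde\Gamma$ in $\tilde P$ is a $(-2)$-curve with $\tilde\Gamma\subseteq\tilde R$, which is the reformulation given. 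Next, since every step has the shape $R_i=\psi_i^*R_{i-1}-2[m_i/2]E_i$ and only subtracts a multiple of the \emph{new} exceptional curve, the coefficient of the strict transform of $\Gamma$ in $R_i$ is independent of $i$; so $\tilde\Gamma\subseteq\tilde R$ forces $\Gamma$ itself to be a component of $R$, and near $\Gamma$ we may write $R=\Gamma+R_h$ (a fibre being the only vertical curve on a ruled surface, no other vertical component of $R$ meets $\Gamma$).

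Next I would count blow-ups. The strict transforms of $\Gamma$ remain smooth rational curves, so each blow-up of the minimal even resolution whose centre lies on the current strict transform of $\Gamma$ lowers its self-intersection by exactly $1$; since $\Gamma^2=0$ and $\tilde\Gamma^2=-2$, \emph{exactly two} blow-ups of the resolution are centred on $\Gamma$. Their centres must be singular points of the corresponding $R_i$ lying on the strict transform of $\Gamma$, i.e.\ intersection points of that strict transform with $R_h$ or with an exceptional curve contained in $R_i$. In particular each point of $\Gamma\cap R_h$ is a singular point of $R$ (of order $1+\ord R_h\ge2$) which must be blown up while still lying on the strict transform of $\Gamma$, so $\#(\Gamma\cap R_h)\le2$.

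The case analysis then uses $R_h\Gamma=2g+2$, the bound $\ord R_h\le g+1$ from the definition of a genus $g$ datum, and the standard blow-up rules (self-intersection drops by $m^2$; a local intersection multiplicity drops by the product of the two orders; the order of a strict transform at an infinitely near point does not exceed the order at the blown-up point; the exceptional curve enters the branch locus iff the blown-up order is odd). If $\Gamma\cap R_h=\{x,y\}$, then at each of $x,y$ the curve $\Gamma$ must be transversal to $R_h$ and the exceptional curve must avoid the branch locus — otherwise the second blow-up would again fall on $\Gamma$, giving $\tilde\Gamma^2\le-3$ — so $\ord_x R_h+\ord_y R_h=2g+2$ with both $\le g+1$, whence $\ord_x R_h=\ord_y R_h=g+1$, and since $g+1$ must then be odd, $g$ is even; this is case~(1). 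If $\Gamma\cap R_h=\{x\}$, then the local intersection number $I_x(\Gamma,R_h)=2g+2>\ord_x R_h$ shows $\Gamma$ is tangent to $R_h$ at $x$; one blows up $x$ and then $p_E=\tilde\Gamma_1\cap E$, and requiring $\tilde\Gamma$ to be disjoint from the rest of $\tilde R$ afterwards forces $\ord_x R_h=g+1$ together with $2g+2-(g+1)=g+1$ for the order of the strict transform of $R_h$ at $p_E$; the parity of $\ord_x R=g+2$ then splits this into $g$ even (exceptional curve not in the branch, the singularity of type $(g+1\to g+1)$ tangent to $\Gamma$, case~(2)) and $g$ odd (exceptional curve in the branch, the singularity of type $(g+2\to g+2)$ tangent to $\Gamma$).

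The main obstacle is the routine but delicate bookkeeping of the reduced even inverse image: at every infinitely near point one must simultaneously track the order of the strict transform of $R_h$ and the parity of the blown-up order (equivalently, whether the freshly created exceptional curve belongs to $R_i$), since both the decrease of $\tilde\Gamma^2$ and the question of whether $\tilde\Gamma$ meets the branch locus again depend on these. One must also check that the singularities of $R_h$ still to be resolved in order to make $\tilde R$ smooth lie off the strict transform of $\Gamma$, so that they do not force a third blow-up on $\Gamma$; this follows from $\tilde R_h\cdot\tilde\Gamma=0$ after the two blow-ups. Translating the resulting local configurations into Xiao's type notation then completes the argument, and running the same analysis backwards shows conversely that each listed configuration does produce a $(-2)$-curve in $\tilde R$.
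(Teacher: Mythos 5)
The paper itself contains no proof of this lemma: it is quoted from Xiao (\cite{Xi91,Xi92}) and the author explicitly omits the argument, so there is nothing to compare step by step; your proof is correct and is the natural (and surely Xiao's) argument --- $\Gamma$ must itself be a component of $R$ since the even-inverse-image operation never changes the coefficient of its strict transform, the drop of $\tilde\Gamma^2$ from $0$ to $-2$ permits exactly two blow-ups centred on $\Gamma$, and the constraints $R_h\Gamma=2g+2$, $\ord\leq g+1$ together with the parity rule governing when an exceptional curve lies in the branch divisor force precisely the three listed configurations. One caveat on labels: at the point of case (2) the order of $R=\Gamma+R_h$ is $g+2$ (even), so ``type $(g+1\to g+1)$'' must be read for the horizontal part $R_h$ alone, whereas for odd $g$ your computation $\ord_{p_1}R_1=g+3$ is exactly what matches Definition \ref{kktype} for a singularity of $R$ of type $(g+2\to g+2)$.
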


\begin{lemma}[\cite{Xi92}]\label{1.3}
 Suppose $E$ is a vertical $(-1)$-curve in $\tilde S$, then the image $\tilde E$ of $E$ in $\tilde P$ is
 an isolated $(-2)$-curve contained in $\tilde R$, and $\tilde E$ either comes from a
 blow-up of a singularity of $R$ with odd order, or is a strict transform of a fiber in Lemma \ref{5.1.2}.
 Conversely, for any singularity of $R$ with odd order or any fiber in Lemma \ref{5.1.2},
 there is a corresponding  vertical $(-2)$-curve.
\end{lemma}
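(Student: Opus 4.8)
The plan is to reverse the double cover $\tilde\theta\colon\tilde S\to\tilde P$: push a vertical $(-1)$-curve of $\tilde S$ down to $\tilde P$, recognise its image as an isolated $(-2)$-component of $\tilde R$, and then read off where that component sits in the minimal even resolution $\tilde\psi\colon\tilde P\to P$. First I would take a vertical $(-1)$-curve $E\subset\tilde S$ and set $\tilde E:=\tilde\theta(E)$. By definition $E\cong\mP^{1}$ and $E^{2}=-1$, so adjunction on $\tilde S$ gives $K_{\tilde S}\cdot E=-1$. If $\tilde\sigma$ did not fix $E$ pointwise, then either $\tilde\sigma(E)=E$ with $\tilde\sigma|_{E}$ nontrivial, whence $\tilde\theta^{*}\tilde E=E$ and $-1=E^{2}=(\tilde\theta^{*}\tilde E)^{2}=2\tilde E^{2}$ is absurd, or $\tilde\sigma(E)\neq E$, whence $\tilde\theta^{*}\tilde E=E+\tilde\sigma(E)$ and $\tilde\theta_{*}E=\tilde E$; combining $K_{\tilde S}\cdot E=-1$ with the canonical bundle formula $K_{\tilde S}=\tilde\theta^{*}(K_{\tilde P}+\tilde\delta)$, $2\tilde\delta\equiv\tilde R$, the projection formula, and adjunction on $\tilde P$ then forces $\tilde R\cdot\tilde E=0$ and $\tilde E^{2}=-1$.

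But $\tilde P$ has no vertical $(-1)$-curve disjoint from $\tilde R$: every $\tilde\psi$-exceptional curve is created by blowing up a singular point of an inverse image of $R$ and therefore still meets, or is contained in, $\tilde R$ after the resolution, while the only other vertical irreducible curves of $P$ are the fibres $\Gamma$ of $\varphi$, which satisfy $\Gamma\cdot R=2g+2>0$ or $\Gamma\subset R$; in every case a strict transform cannot be simultaneously a $(-1)$-curve and disjoint from $\tilde R$ (this is where the minimality of $\tilde\psi$, which blows up only singular points, has to be used). Hence $\tilde\sigma$ fixes $E$ pointwise, $\tilde\theta|_{E}$ is an isomorphism onto $\tilde E\subset\tilde R$, and $\tilde\theta^{*}\tilde E=2E$ gives $2\tilde E^{2}=(2E)^{2}=-4$, so $\tilde E\cong\mP^{1}$ is a $(-2)$-curve; since $\tilde R$ is smooth its components are pairwise disjoint, so $\tilde E$ is an isolated $(-2)$-curve in $\tilde R$, with reduced preimage $E$.

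Next I would locate $\tilde E$ relative to $\tilde\psi$. If $\tilde\psi(\tilde E)$ is a point, then $\tilde E$ is the strict transform of some exceptional curve $E_{j}$, arising over a singular point $x_{j}$ of order $m_{j}$; since $E_{j}\subset R_{j}$ precisely when $m_{j}-2[m_{j}/2]=1$, the inclusion $\tilde E\subset\tilde R$ forces $m_{j}=\ord_{x_{j}}(R)$ to be odd, so $\tilde E$ comes from the blow-up of an odd-order singularity of $R$. If instead $\tilde\psi(\tilde E)$ is a curve, it is vertical and irreducible, hence a fibre $\Gamma$ of $\varphi\colon P\to C$, and $\tilde E$ is the strict transform of $\Gamma$, a $(-2)$-curve contained in $\tilde R$ — exactly the situation of Lemma \ref{5.1.2}, so $\Gamma$ is one of the fibres classified there. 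For the converse, a fibre $\Gamma$ as in Lemma \ref{5.1.2} has, by the very statement of that lemma, inverse image in $\tilde S$ a vertical $(-1)$-curve; and for an odd-order singularity one continues the minimal even resolution from $E_{j}\subset R_{j}$ and exhibits the resulting connected $(-2)$-component of $\tilde R$, whose reduced preimage in $\tilde S$ is the required vertical $(-1)$-curve (cf.\ \cite{Xi92}).

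The hard part will be the converse direction together with the assertion, used above, that $\tilde P$ carries no vertical $(-1)$-curve disjoint from $\tilde R$: both amount to a careful analysis of the minimal even resolution — which points are blown up on each exceptional curve, how the self-intersections drop, and when a curve does or does not remain in the successive reduced even inverse images — and it is exactly here that the minimality of $\tilde\psi$ has to be invoked with care rather than as a black box.
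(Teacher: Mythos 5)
The paper itself gives no proof of this lemma: it is quoted from Xiao (\cite{Xi91,Xi92}) and the text explicitly says the proofs are omitted, so your proposal has to stand on its own. The forward direction is set up correctly (the involution case analysis, $\tilde\theta^{*}\tilde E=2E$ giving $\tilde E^{2}=-2$ when $E$ lies in the ramification locus, the parity of the coefficient of an exceptional curve in the reduced even inverse image detecting odd order, and the reduction of the fiber case to the hypothesis of Lemma \ref{5.1.2}). But the step you yourself flag as deferred is a genuine gap, and the justification you do offer for it is wrong: it is \emph{not} true that every $\tilde\psi$-exceptional curve still meets, or is contained in, $\tilde R$ after the resolution. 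Resolving a tacnode of $R$, for instance, the first exceptional curve ends up a vertical $(-2)$-curve of $\tilde P$ disjoint from $\tilde R$. The claim you actually need concerns $(-1)$-curves only, and its proof must use that a $\tilde\psi$-exceptional curve which is still a $(-1)$-curve in $\tilde P$ is the exceptional curve of the \emph{last} blow-up over its center, so it meets $R_i$ with multiplicity equal to the (even) order of that center, or would have to be a $(-1)$-component of $\tilde R$ when the order is odd, which is impossible since every component $D\subset\tilde R$ has even self-intersection ($\tilde\theta^{*}D=2D'$ gives $D^{2}=2D'^{2}$); for a once-blown-up fiber one needs $\Gamma R=2g+2$ together with the order bound $g+2$ from the genus $g$ datum. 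Also, in the subcase $\tilde\sigma(E)\neq E$, adjunction only yields $\tilde E^{2}=-1+\frac12\tilde R\tilde E$; to conclude $\tilde R\tilde E=0$ you must still exclude that $\tilde E$ is numerically a whole fiber (which would force $\tilde R\tilde E=2g+2$), a small but necessary extra step.

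The second gap is in the converse, and here your sketch asserts something false: for a general odd-order singularity it is not true that the resolution produces a $(-2)$-component of $\tilde R$ whose reduced preimage in $\tilde S$ is a vertical $(-1)$-curve. Take an ordinary triple point of $R$: its exceptional curve lies in $R_1$, the three nodes on it are blown up, and its strict transform is a $(-4)$-component of $\tilde R$ whose reduced preimage is a vertical $(-2)$-curve of $\tilde S$, not a $(-1)$-curve. In general an odd-order center contributes an isolated vertical component of $\tilde R$ of self-intersection $-1-b$, where $b$ is the (odd) number of later blow-ups centered on it, and one obtains a $(-2)$-curve in $\tilde R$, equivalently a vertical $(-1)$-curve upstairs, exactly when $b=1$, i.e.\ when the singularity is of type $(2k+1\to 2k+1)$ in the sense of Definition \ref{kktype}; this refinement is precisely what the paper relies on later (Lemma \ref{tu}(4) and the proof of the main theorem). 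So the converse must either be read weakly (each odd-order singularity and each fiber of Lemma \ref{5.1.2} yields a corresponding isolated vertical component of the branch locus) or be restricted to type-$(k\to k)$ singularities, and the count of infinitely near blow-ups landing on the exceptional curve — exactly the analysis you postponed — is where the real content of the lemma sits.
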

The above two lemmas are easy (see \cite{Xi91}), and we omit their
proofs.
\begin{remark}
As stated in \cite{Xi91}, if we start from a hyperelliptic fibration
$f:S\to C$, we can choose a genus $g$ datum $(P,R,\delta)$ such that
$R^2$ is the smallest, and then the case (1) in Lemma \ref{5.1.2}
doesn't occur. Accordingly, Lemma \ref{1.3} turns to be Lemma 7 in
\cite{Xi91}. In what follows, we always assume that the genus $g$ datum
associated with $f$ satisfies that $R^2$ is the smallest.
\end{remark}

Consequently, in order to study hyperelliptic fibrations we only
need to consider genus $g$ data.

\subsection{Singularity indices}
Based on this preparation, we are able to define the singularity
indices.

Let $(P,R,\delta)$ be a genus $g$ datum over a smooth curve $C$, and
$\tilde\psi$ in expression (\ref{resolution}) be the minimal even
resolution of $(P,R)$. We decompose $\tilde\psi$ into $\psi':\tilde
P\to\hat P$ followed by $\hat\psi:\hat P\to P$, where $\psi'$ and
$\hat\psi$ are composed respectively of negligible and
non-negligible blowing-ups. We may assume
$\hat\psi=\psi_1\circ\cdots\circ\psi_t$, for $t\leq r$. And denote
by $(\hat R,\hat\delta)$ the reduced even inverse image of
$(R,\delta)$ in $\hat P$.

\begin{definition}\label{kktype}
Let $x_i$ be a singularity of $R_{i-1}$ of odd order $2k+1~(1\leq
k\leq [(g+1)/2])$. If $R_i$ has a unique singularity on the inverse
image of $x_i$, say $x_{i+1}$, with order $2k+2$, then we call $x_i$
a {\it singularity of type $(2k+1\to 2k+1)$}.
\end{definition}

\begin{definition}
Let $f:S\to C$ be a fibration and $D$ a reduced curve on $S$. Let
$\phi:D\to C$ be the natural projection induced by $f$. Let
$\nu:\tilde D\to D$ be the normalization of $D$, $D_h$ be the union
of all the irreducible components of $\tilde D$ which maps
projectively onto $C$, and $\nu_h:D_h\to D$ be the induced map. The
ramification index $r(D)$ of $\phi$ is defined as follows:

If $q\in D_h$ is a ramification point of $\phi\circ\nu_h$, then the
ramification index $r_q(D)$ is defined as usual;

If $p$ is a singularity of $D$ with order $m_p$, then the
ramification index is $r_p(D)=m_p(m_p-1)$;

If $E$ is an isolated vertical curve of $\tilde D$, we define the
ramification index to be $r_E(D)=\chi_{\mathrm{top}}(E)$;

Furthermore, we define
\begin{equation}
r(D):=\sum_{q\in D_h}r_q(D)+\sum_{p\in D}m_p(m_p-1)-\sum_{\substack{E\subset\tilde D~\mbox{\tiny{isolated }}\\
\mbox{\tiny{vertical curve}}}}\chi_{\mathrm{top}}(E).
\end{equation}
\end{definition}
\begin{remark}
 It is easy to see that
\begin{equation}
 r(D)=D^2+DK_{S/C},
\end{equation}
  from the adjoint formula
$K_SD+D^2=-2\chi(\CO(D))$ (see \cite{Xi92}).
\end{remark}
When we consider singular fiber $F$ of $f$, the singularities and
ramification points of branch locus are those over $f(F)$ without
confusion.
 \begin{definition}[\cite{Xi91,Xi92}]\label{defnsingularindex}
 Let $f:S\to C$ be a hyperelliptic fibration, and $(P,R,\delta)$ be the corresponding
 genus $g$ datum.
  Suppose $F$ is any fiber of $f$,
 we denote by $\Gamma$ the fiber of $P\to C$ over $f(F)$.
The singularity indices $s_k(F)~(2\leq k\leq g+2)$ are defined as
following.

 (1). Let $E_1,\ldots,E_k$
 be all the isolated vertical $(-2)$-curves in $\hat R$. Let $\hat R_p=\hat R-E_1-\cdots-E_k$,
 then $s_2(F)$ is defined to be the ramification index of $\hat R_p$ over the point
 $f(F)$. Concisely, if we denote by $\CR_{2,1}(F)$ the set of all ramification
 points of $\tilde R$ over $f(F)$, by $\CR_{2,2}(F)$ the set of all
 singularities of $\hat R_p$, and by $\CR_{2,-}(F)$ the set of all
 vertical components in $\hat R_p$, then
 \begin{equation}\label{s2eqn}
s_2(F)=\sum_{q\in\CR_{2,1}(F)}\big((\Gamma,\tilde
R)_q-1\big)+\sum_{q\in\CR_{2,2}(F)}m_q(m_q-1)-2|\CR_{2,-}(F)|.
 \end{equation}

 (2). If $k$ is odd, denote by $\CR_k(F)$ the set of all singularities of $R$
 of type  $(k\to k)$, then $s_k(F):=|\CR_{k}(F)|$.

 (3). If $k\geq4$ is even, denote by $\CR_{k}(F)$ the set of all singularities of $R$ of order $k$,
 not belonging to a singularity of type $(k+1\to k+1)$ or $(k-1\to
 k-1)$, then $s_k(F):=|\CR_{k}(F)|$.

 Define
\begin{equation}
s_k(f)=\sum_{i=1}^s s_k(F_i),
 \end{equation}
 where $F_1,\ldots, F_s$ are all the singular fibers of $f$.
 \end{definition}
\begin{remark}
Xiao introduced the singularity indices in order to compute the
contribution of singular fibers to the invariants $K_f^2, \chi_f$.
It is convenient to put $x_i,x_{i+1}$ in Definition \ref{kktype}
together, and regard the pair $\{x_i,x_{i+1}\}$ of points as one
singularity of type $(2k+1\to 2k+1)$, that is, the total
contribution of $x_i$ and $x_{i+1}$ to singularity indices adds one
to $s_{2k+1}$ only.
\end{remark}
\begin{example}\label{examplesingindex}
Let $(x,t)$ be the local coordinate of $\mP^1\times\Delta$, where
$\Delta$ is the open unit disc of $\mC$. Let
\begin{equation}
\begin{split}
h(x,t)=&(x+t)\big((x-a_0)^2+t\big)\big((x-a_1)^2+t^2\big)\\
&\cdot\big((x-a_2+t)^2+t^3\big)\big((x-a_2-t)^2+t^3\big)\big((x-a_3)^3+t^6\big),
\end{split}
\end{equation}
where $a_i$'s are distinct nonzero complex numbers. Let
$f:S_\Delta\to\Delta$ be the local hyperelliptic fibration of genus
$g$ defined by local equation
\begin{equation}
y^2=h(x,t).
\end{equation}
Let $F=f^{-1}(0)$ be the central fiber of $f$ over the origin,
$\Gamma$ the fiber of $\mP^1\times\Delta\to\Delta$ over the origin.
\vspace{-1cm}
\begin{center}
\setlength{\unitlength}{1mm}
\begin{picture}(50,53)(0,0)
\put(0,0){\makebox(0,0)[l]{Figure 3: The minimal even resolution}}
\multiput(10,10)(0,2){15}{\line(0,1){1}}
\put(5,15){\makebox(0,0)[l]{$\scriptstyle{p_3}$}}
\put(6,10){\makebox(0,0)[l]{$\scriptstyle{\Gamma}$}}
\put(8,15){\line(1,0){4}} \qbezier(8,16)(10,14.3)(12,16)
\qbezier(8,14)(10,15.7)(12,14)

\put(5,20){\makebox(0,0)[l]{$\scriptstyle{p_2}$}}
\qbezier(8,21)(10,19.1)(12,21) \qbezier(8,19)(10,20.9)(12,19)
\qbezier(9,20.5)(9.5,21)(10,20) \qbezier(10,20)(10.5,21)(11,20.5)
\put(8.5,21.5){\makebox(0,0)[l]{$\scriptscriptstyle{2}$}}
\put(10.5,21.5){\makebox(0,0)[l]{$\scriptscriptstyle{2}$}}
\put(5,25){\makebox(0,0)[l]{$\scriptstyle{p_1}$}}
\put(8,26){\line(2,-1){4}}  \put(8,24){\line(2,1){4}}

\put(5,30){\makebox(0,0)[l]{$\scriptstyle{p_0}$}}
\put(12.5,30){\oval(5,5)[l]} \qbezier(10,32)(10.5,32.5)(10.9,32)
\put(10.5,33.5){\makebox(0,0)[l]{$\scriptscriptstyle{2}$}}

\put(8,37){\line(1,0){4}}
\put(15,25){\makebox(0,0)[l]{$\stackrel{\tilde\psi}{\leftarrow}$}}
\put(23,10){\makebox(0,0)[l]{$\scriptstyle{\Gamma}$}}
\multiput(25,10)(0,2){15}{\line(0,1){1}}
\put(22,13){\makebox(0,0)[l]{$\scriptstyle{p_3}$}}
\put(32,13){\makebox(0,0)[l]{$\scriptstyle{p_{31}}$}}
\put(45,15){\makebox(0,0)[l]{$\scriptstyle{E_{31}}$}}
\put(39,8){\makebox(0,0)[l]{$\scriptstyle{E_{32}}$}}
\put(23,15){\line(1,0){20}} \multiput(38,16)(0,-2){5}{\line(0,1){1}}
\multiput(37,13)(0,-1){3}{\line(1,0){2}}

\put(22,18){\makebox(0,0)[l]{$\scriptstyle{p_2}$}}
\put(28,18.5){\makebox(0,0)[l]{$\scriptstyle{p_{21}}$}}
\put(36,18.5){\makebox(0,0)[l]{$\scriptstyle{p_{22}}$}}
\put(45,20){\makebox(0,0)[l]{$\scriptstyle{E_2}$}}
 \multiput(23,20)(2,0){11}{\line(1,0){1}}
 \qbezier(28,21.2)(30,19.2)(32,21.2)  \qbezier(31.5,20.5)(32,20.3)(31.5,20)
 \put(32.5,21){\makebox(0,0)[l]{$\scriptscriptstyle{2}$}}
\qbezier(36,21.2)(38,19.2)(40,21.2)
\qbezier(39.5,20.5)(40,20.3)(39.5,20)
 \put(40.5,21){\makebox(0,0)[l]{$\scriptscriptstyle{2}$}}
\put(22,23){\makebox(0,0)[l]{$\scriptstyle{p_1}$}}
\put(45,25){\makebox(0,0)[l]{$\scriptstyle{E_1}$}}
 \multiput(23,25)(2,0){11}{\line(1,0){1}}
 \put(30,24){\line(0,1){2}}  \put(32,24){\line(0,1){2}}

\put(22,30){\makebox(0,0)[l]{$\scriptstyle{p_0}$}}
\put(27.5,30){\oval(5,5)[l]} \qbezier(25,32)(25.5,32.5)(25.9,32)
\put(25.5,33.5){\makebox(0,0)[l]{$\scriptscriptstyle{2}$}}

\put(23,37){\line(1,0){4}}
\end{picture}
\end{center}

\vspace{1cm}

The branch locus is $R=\{(x,t)\in\mP^1\times\Delta:h(x,t)=0\}$, and
$R\Gamma'=12$, where $\Gamma'$ is any fiber of
$\mP^1\times\Delta\to\Delta$. Hence $g=5$ by Riemann-Hurwitz
formula. Let $p_i=(0,a_i)~(i=0,1,2,3)$ be the points of $R$, where
$p_2$ and $p_3$ are non-negligible.

Let $p_{21}$ and $p_{22}$ be the infinitely near points of $p_2$,
which are smooth points of $\hat R$. Let $p_{31}$ be the infinitely
near singularity of $p_3$, then $\{p_3,p_{31}\}$ is a singularity of
type $(3\to3)$. Therefore $\hat R_p=R$ which is the strict transform
in $\hat P$, and
\begin{equation}\label{eqr}
\begin{split}
&\CR_{2,1}(F)=\{p_0,p_{21},p_{22}\},~\CR_{2,2}(F)=\{p_1\},~\CR_{2,-}(F)=\emptyset,\\
&\CR_{3}(F)=\{\{p_3,p_{31}\}\}, ~~~\CR_4(F)=\{p_2\}.
\end{split}
\end{equation}
Furthermore, the singularity indices are
\begin{equation}\label{eqs}
\big(s_2(F),s_3(F),\ldots,s_7(F)\big)=(5,1,1,0,0,0).
\end{equation}
\end{example}

Using the singularity indices, Xiao obtained the following
local-global formula.

\begin{theorem}[Theorem 5.1.7, \cite{Xi92}]\label{Theorem 5.1.7}
Let $f:S\to C$ be a hyperelliptic fibration of genus $g$, then
\begin{equation}
\begin{split}
&(8g+4)\chi_f=g\big(s_2(f)-2s_{g+2}(f)\big)+\sum_{k=2}^{[\frac{g-1}2]}2(k+1)(g-k)s_{2k+2}(f)\\
&~~~~~~~~~~~~~~~+\sum_{k=1}^{[\frac{g+1}2]}4k(g-k)s_{2k+1}(f),\\
&e_f=
s_2(f)-3s_{g+2}(f)+\sum_{k=1}^{[\frac{g-1}2]}2s_{2k+2}(f)+\sum_{k=1}^{[\frac{g+1}2]}s_{2k+1}(f),\\
&(2g+1)K_f^2=(g-1)s_2(f)+3s_{g+2}(f)+\sum_{k=1}^{[\frac{g-1}2]}a_ks_{2k+2}(f)+\sum_{k=1}^{[\frac{g+1}2]}b_ks_{2k+1}(f),
\end{split}
\end{equation}
where $a_k=6\big((k+1)(g-k)-4g-2\big)$, and $b_k=12k(g-k)-2g-1$.
\end{theorem}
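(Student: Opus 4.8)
The plan is to run the double cover computation of $\chi_f$, $e_f$ and $K_f^2$, arranging the bookkeeping so that each of the left-hand quantities appears as a \emph{generic} term — built only out of $g$, the genus $b$ of $C$, and the numerical class of the branch divisor $R$ — plus a sum of purely \emph{local} contributions, one package for each fiber $\Gamma$ of $\varphi\colon P\to C$ over which $R$ fails to be smooth and transverse. The content of the theorem is then the assertion that each such local package reassembles, with the indicated coefficients, into the singularity indices $s_k(F)$. Since Noether's relation gives $12\chi_f=K_f^2+e_f$, it is in fact enough to carry this out for, say, $\chi_f$ and $e_f$, the $K_f^2$ formula following by elimination.

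Concretely, I would start from the genus $g$ datum $(P,R,\delta)$ attached to $f$, its minimal even resolution $\tilde\psi\colon\tilde P\to P$ with smooth branch $\tilde R$ and square root $\tilde\delta$, the double cover $\tilde\theta\colon\tilde S\to\tilde P$ and the contraction $\rho\colon\tilde S\to S$ of the vertical $(-1)$-curves, and use three successive descents. First, for the smooth double cover $\tilde\theta$ the standard formulas give $K_{\tilde S}^2=2(K_{\tilde P}+\tilde\delta)^2$, $\chi(\CO_{\tilde S})=2\chi(\CO_{\tilde P})+\tfrac12\tilde\delta(K_{\tilde P}+\tilde\delta)$ and $\chi_{\mathrm{top}}(\tilde S)=2\chi_{\mathrm{top}}(\tilde P)+2\tilde\delta(K_{\tilde P}+2\tilde\delta)$. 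Second, writing $\tilde\psi=\psi_1\circ\cdots\circ\psi_r$ with $\psi_i$ centered at $x_i\in R_{i-1}$ of order $m_i$, and letting $\CE_i\in\mathrm{Pic}(\tilde P)$ denote the total transform of the $i$-th exceptional curve (so $\CE_i\CE_j=0$ for $i\neq j$ and $\CE_i^2=-1$), the relations $K_{P_i}=\psi_i^*K_{P_{i-1}}+E_i$ and $\delta_i=\psi_i^*\delta_{i-1}-[m_i/2]E_i$ telescope to
\[
K_{\tilde P}+\tilde\delta=\tilde\psi^*(K_P+\delta)+\sum_i\big(1-[m_i/2]\big)\CE_i,\qquad \tilde\delta=\tilde\psi^*\delta-\sum_i[m_i/2]\CE_i,
\]
whence $(K_{\tilde P}+\tilde\delta)^2=(K_P+\delta)^2-\sum_i([m_i/2]-1)^2$, $\tilde\delta(K_{\tilde P}+\tilde\delta)=\delta(K_P+\delta)-\sum_i[m_i/2]([m_i/2]-1)$, and an analogous expression for $\tilde\delta(K_{\tilde P}+2\tilde\delta)$, together with $\chi(\CO_{\tilde P})=\chi(\CO_P)$ and $\chi_{\mathrm{top}}(\tilde P)=\chi_{\mathrm{top}}(P)+r$. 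Third, if $n$ vertical $(-1)$-curves are contracted by $\rho$, then $K_S^2=K_{\tilde S}^2+n$, $\chi_{\mathrm{top}}(S)=\chi_{\mathrm{top}}(\tilde S)-n$, $\chi(\CO_S)=\chi(\CO_{\tilde S})$, and by Lemma \ref{1.3} the integer $n$ is the number of odd-order (type $(2k+1\to 2k+1)$) singularities of $R$ plus the number of fibers of the kind described in Lemma \ref{5.1.2} — data already carried by the $s_{2k+1}$ and by $s_{g+2}$.

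Substituting all of this, feeding in the ruled-surface numerics $R\Gamma=2g+2$ and $\delta\Gamma=g+1$, and using $K_f^2=K_S^2-8(g-1)(b-1)$, $\chi_f=\chi(\CO_S)-(g-1)(b-1)$, $e_f=\chi_{\mathrm{top}}(S)-4(g-1)(b-1)$, the part built only from $g$, $b$ and the class of $R$ cancels, and one is reduced to an identity that is local around each fiber of $\varphi$ over a singular fiber $F$ of $f$. There the local contribution is a sum over the singularities of $R$ above $f(F)$ (infinitely near ones included), the smooth ramification points of $\varphi_R$, and the vertical components of $R$ there, and I would match it term by term with Definition \ref{defnsingularindex}: a negligible singularity contributes $0$ to $K_f^2$ and to $\chi_f$ by the very definition of negligibility, and is felt only through $s_2$; a chain of infinitely near singular points forming a type $(2k+1\to 2k+1)$ singularity in the sense of Definition \ref{kktype}, bundled with the unit it supplies to $n$, contributes exactly the coefficient attached to $s_{2k+1}$ in each formula (namely $4k(g-k)$, $1$ and $b_k$); a plain singularity of even order $2k+2$ not belonging to such a chain contributes the coefficient attached to $s_{2k+2}$ ($2(k+1)(g-k)$, $2$ and $a_k$); the remaining ramification points, low-order singular points and non-$(-2)$ vertical components are absorbed into $s_2(F)$ through the ramification index $r(\hat R_p)$ of Definition \ref{defnsingularindex}(1), with coefficients $g$, $1$ and $g-1$; and the isolated vertical $(-2)$-components, i.e.\ the fibers of Lemma \ref{5.1.2} where $R$ contains $\Gamma$, collect into $s_{g+2}(F)$ with coefficients $-2g$, $-3$ and $3$. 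Summing over all singular fibers then yields the three formulas.

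I expect this last step to be the main obstacle: it is a genuinely case-by-case local intersection computation, and the delicate points are the handling of the self- and cross-intersections of the exceptional divisors over a singular point whose minimal even resolution produces a chain of infinitely near singular points of decreasing order; the separation of the ``horizontal'' resolution contribution from the ``$(-2)$-curve'' contribution, so that an odd-order chain is recorded in $s_{2k+1}$ rather than split between $s_{2k}$ and $s_{2k+2}$; the verification that the contributions of order-$\le 3$ configurations cancel, i.e.\ negligibility; and the separate treatment of the fibers with $R\supset\Gamma$, so that the corresponding vertical curves and the value of $n$ feed correctly into $s_{g+2}$. Once these local tables are in place the global identities follow by summation over the singular fibers.
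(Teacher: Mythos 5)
You should first note that the paper contains no proof of this statement at all: it is quoted verbatim from Xiao's book (Theorem 5.1.7 of \cite{Xi92}), so the only meaningful comparison is with Xiao's original argument, and your outline is in fact that argument's skeleton. The global bookkeeping you set up is sound: the three standard identities for the smooth double cover $\tilde\theta$, the telescoping of $K_{\tilde P}+\tilde\delta$ and $\tilde\delta$ through the even resolution with the orthogonal total transforms $\CE_i$, the correction $K_S^2=K_{\tilde S}^2+n$, $\chi_{\mathrm{top}}(S)=\chi_{\mathrm{top}}(\tilde S)-n$ with $n$ controlled by Lemma \ref{1.3}, and the reduction from three formulas to two via $12\chi_f=K_f^2+e_f$ are all correct (the last step even shows that the coefficient in the third formula must be read as $a_k=6(k+1)(g-k)-4g-2$; the printed parenthesization is a misprint, as is the lower limit $k=2$ in the first sum).

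The genuine gap is that the entire content of the theorem is the local dictionary you defer to the last step and only assert. One must actually prove the contribution tables: that a type $(2k+1\to 2k+1)$ chain of infinitely near points (Definition \ref{kktype}), bundled with the $(-1)$-curve it forces $\rho$ to contract, contributes exactly $\bigl(4k(g-k),\,1,\,b_k\bigr)$ to $\bigl((8g+4)\chi_f,\,e_f,\,(2g+1)K_f^2\bigr)$; that an even-order singularity of order $2k+2$ not in such a chain, together with its whole tower of infinitely near points, contributes $\bigl(2(k+1)(g-k),\,2,\,a_k\bigr)$; that negligible configurations and smooth ramification are felt only through $s_2$ with weights $(g,1,g-1)$ via $r(\hat R_p)=\hat R_p^2+\hat R_pK_{\hat P/C}$, including the $-2|\CR_{2,-}(F)|$ term of Definition \ref{defnsingularindex}; and that fibers with $\Gamma\subset R$ (Lemma \ref{5.1.2}) give the $(-2g,-3,3)$ coefficients of $s_{g+2}$. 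None of this follows formally from your telescoped identities, because the sums $\sum_i([m_i/2]-1)^2$ and $\sum_i[m_i/2]([m_i/2]-1)$ run over the full infinitely-near tower above each point and must be reorganized, configuration by configuration, into the $s_k$'s — exactly the case-by-case computation you flag as ``the main obstacle'' and do not carry out, and which occupies the bulk of Xiao's proof. As it stands the proposal is a correct plan with the decisive local verifications missing, so it cannot be accepted as a proof of the theorem.
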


As a corollary, he proved that

\begin{corollary}[\cite{Xi92}]\label{coroXiao}
Suppose $f$ is hyperelliptic, then the slope of $f$
$$
\frac{4g-4}{g}\leqslant\lambda_f\leqslant\begin{cases} 12-
\frac{8g+4}{g^2}, & \mbox{if~} g \mbox{~is~even,}\\
12-\frac{8g+4}{g^2-1}, & \mbox{if~} g \mbox{~is~odd,}
\end{cases}
$$ Moreover, the left equality holds if and only if
$s_2(f)\neq0, s_k=0~ (k>2)$, and the right equality holds if and
only if $s_{2[g/2]+1}\neq0$ and other singularity indices are all
zero.
\end{corollary}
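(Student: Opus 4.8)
The plan is to derive Corollary \ref{coroXiao} directly from the three local-global formulas in Theorem \ref{Theorem 5.1.7}, which express $\chi_f$, $e_f$ and $K_f^2$ as non-negative linear combinations of the singularity indices $s_2(f), s_3(f), \ldots, s_{g+2}(f)$. Since $\lambda_f = K_f^2/\chi_f$, bounding the slope amounts to bounding, for each index $k$, the ratio of the coefficient of $s_k(f)$ in the (normalized) $K_f^2$-formula against its coefficient in the (normalized) $\chi_f$-formula; because $\chi_f$ is a positive linear combination of the $s_k(f)$ and all $s_k(f)\ge 0$, the global ratio $K_f^2/\chi_f$ is a weighted average (a convex combination, after reweighting by the $\chi_f$-coefficients) of these per-index ratios, and hence lies between the smallest and the largest of them. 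So the first step is: divide the $K_f^2$ equation by $(2g+1)$ and the $\chi_f$ equation by $(8g+4)=4(2g+1)$, so that the two formulas have a common structure, and read off for each $k$ the ratio $\lambda_k := (\text{coeff of } s_k \text{ in } K_f^2/(2g+1))\,/\,(\text{coeff of } s_k \text{ in } \chi_f/(8g+4))$.

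Next I would compute these ratios explicitly. For $s_2$: the $\chi_f$-side coefficient is $g/(8g+4)$ and the $K_f^2$-side coefficient is $(g-1)/(2g+1)$, giving $\lambda_2 = 4(g-1)/g = (4g-4)/g$. For $s_{2k+1}$ ($1\le k \le [(g+1)/2]$): the ratio is $b_k/(4k(g-k)) = (12k(g-k)-2g-1)/(4k(g-k))$, which I would rewrite as $3 - (2g+1)/(4k(g-k))$; this is increasing in $k(g-k)$, hence minimized at $k=1$ and maximized at $k=[(g+1)/2]$. For $s_{2k+2}$ ($2\le k \le [(g-1)/2]$): the ratio is $a_k/(2(k+1)(g-k)) = 6((k+1)(g-k)-4g-2)/(2(k+1)(g-k)) = 3 - 3(4g+2)/((k+1)(g-k))$, again of the same shape and comparable to the odd-index ratios after a short computation. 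One checks that $\lambda_2 = (4g-4)/g$ is the smallest among all these ratios (this is where a couple of elementary inequalities in $g$ and $k$ are needed — comparing $(4g-4)/g$ with $3 - (2g+1)/(4k(g-k))$ and with the even-index expressions), and that the largest ratio is attained at the odd index $2k+1$ with $k=[g/2]$, i.e. $s_{2[g/2]+1}$; plugging $k=[g/2]$ into $3 - (2g+1)/(4k(g-k))$ gives $12 - (8g+4)/g^2$ when $g$ is even and $12 - (8g+4)/(g^2-1)$ when $g$ is odd, matching the stated bounds. The term $s_{g+2}$ has coefficients $3$ and $-3$ (with signs $+3s_{g+2}$ in $K_f^2$ and $-3s_{g+2}$ in $\chi_f$ after normalizing $\chi_f$ by $8g+4$: precisely $-3/(8g+4)$ versus $3/(2g+1)$), so it only pushes the slope upward and does not affect the lower bound; for the upper bound one notes $s_{g+2}$ does not occur among the extremal odd indices in the relevant range and is dominated, or one restricts attention to the cases where the bound is claimed sharp.

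Finally, the equality analysis. The convex-combination description makes this immediate: $\lambda_f$ equals the minimum value $(4g-4)/g$ precisely when all the weight sits on the index achieving that minimum, i.e. $s_2(f)\neq 0$ and $s_k(f)=0$ for all $k>2$; likewise $\lambda_f$ equals the maximum precisely when $s_{2[g/2]+1}(f)\neq 0$ and all other $s_k(f)=0$. One has to be slightly careful that no other index achieves the same extremal ratio (so that the equality case is genuinely unique), which again reduces to strict versions of the elementary inequalities above, and to invoke Corollary \ref{g+2=0} or the semistable reduction only if needed to handle $s_{g+2}$ in the boundary cases.

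I expect the main obstacle to be purely the bookkeeping in the second step: verifying that $\lambda_2=(4g-4)/g$ really is the global minimum over all admissible $k$ (both parities) and that $s_{2[g/2]+1}$ gives the global maximum, splitting into the even-$g$ and odd-$g$ cases, and being careful about the ranges of $k$ and the anomalous term $s_{g+2}$. None of this is deep, but it is the part where an error would most easily creep in; everything else is the formal observation that a ratio of non-negative linear combinations is squeezed between the extreme coefficient-ratios.
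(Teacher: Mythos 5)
You should first note that the paper does not actually prove this corollary: it is quoted from Xiao \cite{Xi92}, so there is no in-paper argument to compare with, and your coefficient-ratio (weighted average) strategy based on Theorem \ref{Theorem 5.1.7} is indeed the standard route. The genuine gap in your outline is the term $s_{g+2}(f)$. Its coefficient in the $\chi_f$-formula is negative ($-2g$ in $(8g+4)\chi_f$) while its coefficient in $(2g+1)K_f^2$ is $+3$, so a nonzero $s_{g+2}$ pushes the slope \emph{upward}; this is harmless for the lower bound (as you say) but it threatens precisely the upper bound you must prove. Concretely, for $g$ even the upper bound is equivalent to $g^2e_f\ge(8g+4)\chi_f$, and there the $s_{g+2}$-coefficients are $-3g^2$ on the left against $-2g$ on the right, so the termwise comparison fails. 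Saying the term ``is dominated'' is an assertion, not an argument, and ``restricting to the cases where the bound is claimed sharp'' proves nothing about the inequality; nor can you dispose of it by semistable reduction via Corollary \ref{g+2=0}, because the corollary is stated for arbitrary hyperelliptic $f$, and $K_f^2,\chi_f$ are not base-change multiplicative (they coincide with $\kappa(f),\lambda(f)$ only in the semistable case). Closing this point requires a genuine geometric input bounding the configurations counted by $s_{g+2}$ in terms of the other indices; that is exactly the part of Xiao's proof your outline omits.

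There are also coefficient problems that, as written, break your lower-bound computation. You took the printed $a_k=6\big((k+1)(g-k)-4g-2\big)$ and the range $k\ge2$ in the even sum of the $\chi_f$-formula at face value; both are misprints, as you can see by feeding Noether's formula $K_f^2=12\chi_f-e_f$ into the three displayed identities: consistency forces $a_k=6(k+1)(g-k)-4g-2$ and the $\chi_f$-sum to start at $k=1$. With the printed $a_k$ your even-index ratios become negative already at small $k$, the minimum ratio is then not attained at $s_2$, and the asserted characterization of the lower-bound equality collapses. There is also a normalization slip in the odd indices: since $8g+4=4(2g+1)$, the correctly normalized per-index slope for $s_{2k+1}$ is $12-(2g+1)/\big(k(g-k)\big)$, not $3-(2g+1)/\big(4k(g-k)\big)$; with your expression, plugging in $k=[g/2]$ does not yield $12-(8g+4)/g^2$. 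Once the coefficients are corrected, the identification of $s_2$ as the unique minimizing index and $s_{2[g/2]+1}$ as the unique maximizing one, and hence the equality analysis, does go through for every index except $s_{g+2}$ — which returns you to the gap above.
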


\section{Modular invariants in semistable case}
At the beginning of this section, we fix notations firstly.

Let $(P,R,\delta)$ be a genus $g$ datum over a smooth curve $C$, and
$\tilde\psi$ in (\ref{resolution}) be the minimal even resolution.
Let $f:S\to C$ be the fibration determined by the datum, and $F$ be
any singular fiber of $f$. Denote by $\tilde F$ the total transform
of $F$ by $\rho:\tilde S\to S$, which is a birational morphism
contracting all the vertical $(-1)$-curves. Let $\Gamma$ be the
fiber of $\varphi:P\to C$ over $t=f(F)$, and we call $\Gamma$ {\it
the image of $F$ in $P$} briefly. Let
$\tilde\Gamma=\tilde\psi^*(\Gamma)$ be the total transform of
$\Gamma$ by the minimal even resolution $\tilde\psi:\tilde P\to P$
of $R$. To keep it simple, we also denote by $R$~(resp. $\Gamma$)
the strict transform of $R$ (resp. $\Gamma$) under the even
resolution $\tilde\psi$.

\centerline{\mbox{\xymatrix{
\tilde F\subset\tilde{S} \ar@{->}"1,3"^-{\tilde{\theta}}\ar[d]_-{\rho} &  & {\tilde{P}\supset \tilde\Gamma} \ar[d]^-{\tilde\psi} \\
F\subset S  \ar@{-->}"2,3"^-{}  \ar[dr]_-f   &  & P\supset \Gamma \ar[dl]^-{\varphi}\\
&t\in C & }}}

Denote by $B=\tilde\theta^{-1}(\Gamma)$ the inverse image of
$\Gamma$ in $\tilde F$, and by $B_i=\tilde\theta^{-1}(E_i)$ the
inverse image of the exceptional curve $E_i$. Then $B$ (resp. $B_i$)
may be composed by two irreducible curves $B'$ and $B''$ (resp.
$B_{i}'$ and $B_{i}''$). Let
\begin{equation}\label{tildegamma}
\tilde\Gamma=\Gamma+\sum_{i=1}^rm_iE_i,
\end{equation}
then
\begin{equation}
\tilde
F=\tilde\theta^*(\tilde\Gamma)=\tilde\theta^*(\Gamma)+\sum_{i=1}^rm_i\tilde\theta^*(E_i)
=nB+\sum_{i=1}^rn_iB_i,
\end{equation}
where $n=1,2$ and $n_i=m_i$ or $n_i=2m_i$. Therefore, $F=\rho(\tilde
F)$ is obtained by contracting $(-1)$-curves in $\tilde F$.

\begin{definition}
An {\it even resolution at point $p$} of $R$  is a sequence of
blowing-ups
$\check\psi_p=\psi_1\circ\psi_2\circ\cdots\circ\psi_l:\check P\to P$
\begin{equation}\label{localresolution}
(\check P,\check R){=}(P_l,R_l)\stackrel{\psi_l}{\to} \cdots{\to}
(P_2,R_2)\stackrel{\psi_2}{\to} (P_1,R_1) \stackrel{\psi_1}{\to}
(P_0,R_0){=}(P,R),
\end{equation}
satisfying the following conditions:

(i). all the points of $\check R$ infinitely near $p$, including
$p$, are smooth,

(ii). $R_i$ is the reduced even inverse image of $R_{i-1}$ under
$\psi_i$.

Furthermore, $\check\psi_p$ is called the {\it minimal even
resolution at $p$} of $R$ if

(iii). $\psi_i$ is the blowing-up of $P_{i-1}$ centered at a
singular point $p_i$ of $R_{i-1}$ which is infinitely near $p$ for
any $1\leq i\leq l$.
\end{definition}

If the resolution $\check\psi_p$ is minimal, we call the desired
number $l$ of blowing-ups {\it the length} of the minimal even
resolution $\check\psi_p$ at $p$ of $R$, and we denote the length
$l$ by $l_p$. The exceptional curves $E_i$'s ($1\leq i\leq l$) in
$P_l$ are called {\it exceptional curves from $p$} briefly.

For example, if $p$ is an ordinary singularity of even order, then
$l_p=1$. If $p$ is a singularity of type $(3\to3)$, then $l_p\geq2$.

Let $p$ be any singularity of $R$, and $E_1,\ldots,E_{l_p}$ be all
the exceptional curves from $p$ in $P_{l_p}$. Set
\begin{equation}
\CE_p:=m_1E_1+\ldots+m_{l_p}E_{l_p},
~~~\CB_p:=\tilde\theta^*(\CE_p),
\end{equation}
where
$m_i=\mathrm{mult}_{E_i}(\CE_p)=\mathrm{mult}_{E_i}(\tilde\Gamma)$
(See (\ref{tildegamma})). Then we call $\CE_p$ {\it the block of
$\tilde\Gamma$ from $p$}, and call
\begin{equation}
F_p:=\rho(\CB_p).
\end{equation}
{\it the block of $F$ from $p$}.

Assume that $\Gamma$ is not contained in $R$. Let $p_1,\ldots,p_e$
be all the singularities of $R$ on $\Gamma$ in $P$, and
$\CB_{p_0}=\tilde\theta^*(\Gamma)$, then we can decompose $F$ into
finite blocks
\begin{equation}
 F=F_{p_0}+F_{p_1}+\ldots+F_{p_e},
\end{equation}
and we call it {\it the modular decomposition} of $F$.
\begin{example}\label{example}[Continuation of Example \ref{examplesingindex}]
Let $f:S_\Delta\to\Delta$ be the local fibration in Example
\ref{examplesingindex}. Then $l_{p_1}=1,~l_{p_2}=1,~l_{p_3}=2$. The
blocks of $\tilde\Gamma$ are
\begin{equation*}
\CE_{p_1}=E_1,~~\CE_{p_2}=E_2,~~\CE_{p_3}={E}_{31}+E_{32}.
\end{equation*}
\begin{center}
\begin{picture}(30,30)(0,0)
\put(-15,0){\makebox(0,0)[l]{Figure 4: Modular decomposition of
$F$}} \put(8,30){\makebox(0,0)[l]{$\scriptstyle{B}$}}
\put(6,26){\makebox(0,0)[l]{$\scriptstyle{q_0}$}}
\put(-1,21){\makebox(0,0)[l]{$\scriptstyle{B_1}$}}
\put(6,22.5){\makebox(0,0)[l]{$\scriptstyle{q_{11}}$}}
\put(6,17.5){\makebox(0,0)[l]{$\scriptstyle{q_{12}}$}}
\put(22,16){\makebox(0,0)[l]{$\scriptstyle{B_2'}$}}
\put(22,12){\makebox(0,0)[l]{$\scriptstyle{B_2''}$}}
\put(22,7){\makebox(0,0)[l]{$\scriptstyle{B_{32}}$}}
\put(1,14){\makebox(0,0)[l]{$\scriptstyle{q_{21}}$}}
\put(1,10.5){\makebox(0,0)[l]{$\scriptstyle{q_{22}}$}}
\put(2,7.5){\makebox(0,0)[l]{$\scriptstyle{q_{3}}$}}
\put(7,11){\makebox(0,0)[l]{$\scriptstyle{q_{23}}$}}
\put(16,11){\makebox(0,0)[l]{$\scriptstyle{q_{24}}$}}

\qbezier(5,26)(5,28)(3,28) \qbezier(3,28)(1,28)(1,26)
\qbezier(3,24)(4,24)(7,30) \qbezier(1,26)(1,24)(3,24)
\put(5,5){\line(0,1){21}} \qbezier(2,22)(7,22)(7,20)
\qbezier(2,18)(7,18)(7,20)

\qbezier(2,16)(7,16)(9,14) \qbezier(2,12)(7,12)(9,14)
\qbezier(13,16)(11,16)(9,14) \qbezier(13,12)(11,12)(9,14)

\qbezier(13,16)(15,16)(17,14) \qbezier(13,12)(15,12)(17,14)
\qbezier(21,16)(19,16)(17,14) \qbezier(21,12)(19,12)(17,14)

\put(2,9){\line(1,0){20}}
\end{picture}
\end{center}

Here $E_1,E_2,E_{32}$ are not contained in $\tilde R$, and $E_{31}$
is contained in $\tilde R$. Then the blocks of $F$ are
\begin{equation*}
F_{p_0}=B,~~F_{p_1}=B_{1},~~F_{p_2}=B_{2}'+B_{2}'',~~F_{p_3}=B_{32}.
\end{equation*}
In the above equation, $B$ is a rational curve with a node $q_0$;
$B_1$ is $\mP^1$ meets $B$ at two points $q_{11},q_{12}$;
$B_{2}',B_{2}''$ are both $\mP^1$ meeting with $B$ at
$q_{21},q_{22}$ respectively and with each other at two points
$q_{23},q_{24}$; and $B_{32}$ is a smooth elliptic curve meeting
with $B$ at $q_3$. Then $F$ is semistable, and the modular
decomposition of $F$ is
\begin{equation*}
F=\sum_{i=0}^2F_{p_i}=B+B_{1}+B_{2}'+B_{2}''+B_{32}.
\end{equation*}
\end{example}

\subsection{Semistable criterion}

There is a criterion for semistable hyperelliptic fiber given by Tu
\cite{Tu08}. We rewrite the result and proof here, for the reference
is in Chinese.

\begin{lemma}[\cite{Tu08}]\label{tu}
Suppose $F$ is a semistable fiber of a hyperelliptic fibration
$f:S\to C$. Then we have the following,

(1).  If $g$ is odd, then $\Gamma$ is not contained in $R$; if $g$
is even and $\Gamma$ is contained in $R$, then $\Gamma$ is the fiber
in Lemma \ref{5.1.2}.

(2).  Suppose $p$ is a smooth point of $R$ in $P$, then the
intersection number of $R$ with $\Gamma$ at $p$ is $(R,\Gamma)_p\leq
2$.

(3). If $p$ is a singularity of $R$ in $P$, then we have
$(R,\Gamma)_p=\mathrm{ord}_p(R)$.

(4). Let $q$ and $E$ be the same as above. If $\ord_q(R)=l$ is even,
then $E$ is not contained in the branch locus $\tilde R$. If
$\ord_q(R)=l$ is odd, then either $E$ is contained in $\tilde R$,
and thus $E$ is from a singularity of type $(k\to k)~(k \mbox{ is
odd}, ~~k\geq l)$; or $E$ is not contained in $\tilde R$, and thus
$q$ is a singularity of type $(k\to k)$.

(5). Let $q\in R_i$ (for some $i\geq1$) be an infinitely near
singularity, then there is exactly one exceptional curve $E_q$ in
$P_j$ passing through $q$, and $(R,E_q)_q=\mathrm{ord}_q(R)$.

(6). Let $q$ be an infinitely near smooth point, and $E$ be the same
as above, then $(R,E)_q\leq 2$, and $E$ is not contained in $\tilde
R$.
\end{lemma}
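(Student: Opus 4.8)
The plan is to deduce all six assertions from an explicit local model of the fibre $F$, obtained from the double cover $\tilde\theta:\tilde S\to\tilde P$ and the minimal even resolution. First I would reduce the notion of semistability: since $f$ is relatively minimal and $g\ge 2$, every smooth rational component of $F$ automatically meets the rest of $F$ in at least two points (otherwise it would be a $(-1)$-curve or a rational fibre), so ``$F$ semistable'' is equivalent to ``$F$ reduced with only nodes''. On the other hand $F$ is obtained from $\tilde F=\tilde\theta^{*}\tilde\Gamma$ by contracting the vertical $(-1)$-curves of $\tilde S$, and writing $\tilde\Gamma=\Gamma+\sum_i m_i E_i$ one has $\tilde F=\tilde\theta^{*}\Gamma+\sum_i m_i\,\tilde\theta^{*}E_i$, where $\tilde\theta^{*}E_i$ is reduced when $E_i\not\subset\tilde R$ and is $2B_i$ with $B_i$ reduced when $E_i\subset\tilde R$. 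Two elementary local facts then drive everything. (a) At a point $p\in\Gamma$ that is a smooth point of $R$ in $P$, the minimal even resolution does nothing near $p$ and $\tilde S$ has no $(-1)$-curve there, so $F$ is locally the plane curve $z^{2}=f(x,0)$, where $z^{2}=f(x,t)$ is the local equation of $\tilde\theta$ and $t$ the base parameter; this is smooth if $(R,\Gamma)_{p}=1$, a node if $(R,\Gamma)_{p}=2$, and strictly worse than a node if $(R,\Gamma)_{p}\ge 3$. (b) Blowing up an $m$-fold point of $R$ gives $R_{1}=R'+(m-2[m/2])E$, so the exceptional curve lies in the branch locus exactly when $m$ is odd, and the strict transforms of $R$ and of the fibre direction meet again on $E$ exactly when the fibre direction lies in the tangent cone of $R$.

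Assertion (2) is immediate from (a). For (3), note first that $(R,\Gamma)_{p}\ge\ord_{p}(R)$ always, and suppose $(R,\Gamma)_{p}>m:=\ord_{p}(R)$; then $\Gamma$ lies in the tangent cone of $R$ at $p$, so after blowing up $p$ the strict transforms of $R$ and $\Gamma$ still meet, at one point $p_{1}$ of $E$, with intersection number $(R,\Gamma)_{p}-m>0$. Iterating along the resolution tree, the strict transform of $\Gamma$ keeps meeting that of $R$, and, $\Gamma$ having multiplicity one, the multiplicities $m_i=\mathrm{mult}_{E_i}(\tilde\Gamma)$ of the exceptional curves produced at centres on $\Gamma$ increase by one at each step. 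Tracking $\tilde F$ through this chain one finds that the extra tangency cannot be absorbed: either some $E_i$ attains $m_i\ge 2$ and $\tilde F$ carries a component of multiplicity $\ge 2$ that survives all the contractions $\rho$ makes, or the chain terminates with $\Gamma^{\dagger}$ (or some $E_i$) tangent to a smooth branch of $\tilde R$ and a local computation with the double-cover equation produces a point of $F$ worse than a node. In either case $F$ is not semistable, so $(R,\Gamma)_{p}=m$. Assertions (5) and (6) are the same two facts applied at an infinitely near stage, with the exceptional curve $E_{q}$ through an infinitely near singularity $q$ in the role of $\Gamma$: if two exceptional curves passed through $q$ the same bookkeeping gives a forbidden configuration on $F$, so $E_{q}$ is unique, and then $(R,E_{q})_{q}=\ord_{q}(R)$ and $(R,E)_{q}\le 2$ follow exactly as in (3) and (2), while $E\not\subset\tilde R$ at an infinitely near smooth point is the $m$-even clause of (b).

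For (1): if $\Gamma\subset R$, its strict transform $\Gamma^{\dagger}$ survives in $\tilde R$ with coefficient one, because the corrections $-2[m_i/2]E_i$ only touch the exceptional curves; hence $\tilde\theta^{-1}(\Gamma^{\dagger})$ is a single curve $B_{\Gamma}$ with $\tilde\theta^{*}\Gamma^{\dagger}=2B_{\Gamma}$, so $B_{\Gamma}$ occurs in $\tilde F$ with multiplicity $2$, and reducedness of $F$ forces $\rho$ to contract $B_{\Gamma}$, i.e.\ $B_{\Gamma}$ is a vertical $(-1)$-curve. Since $B_{\Gamma}^{2}=\tfrac12(\Gamma^{\dagger})^{2}$ this means $\Gamma^{\dagger}$ is a $(-2)$-curve in $\tilde R$, so $\Gamma$ is precisely the fibre of Lemma \ref{5.1.2}, and the standing choice that $R^{2}$ is minimal excludes case (1) of that lemma. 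For $g$ odd one must further exclude this configuration: there Lemma \ref{5.1.2} places a singularity of type $(g+2\to g+2)$ at the unique point $R_{h}\cap\Gamma$, whose resolution is a chain along which the $m_i$ grow; tracking $\tilde F$ through it and contracting the available $(-1)$-curves one is left with a multiple component of $F$ or a $(-1)$-curve in $F$, contradicting reducedness or relative minimality, so $\Gamma\not\subset R$ when $g$ is odd. Finally (4) is a summary of (b): $E\subset\tilde R$ iff $\ord_{q}(R)$ is odd, and, by Lemma \ref{1.3}, a vertical $(-2)$-curve of $\tilde R$ comes either from a fibre as in Lemma \ref{5.1.2} or from a singularity of type $(k\to k)$, which is the stated dichotomy.

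The hard part is the bookkeeping underlying (3) and the $g$-odd case of (1): one has to follow, through an a priori long chain of blow-ups of $R$ and the subsequent contractions of $(-1)$-curves, both the multiplicities of the components of $\tilde F$ and which exceptional curves lie in $\tilde R$, and show that a single excess tangency of $\Gamma$ with $R$ --- or $\Gamma$ being a component of $R$ for $g$ odd --- must descend to a point where $F$ is either non-reduced or has a singularity worse than a node. Once that propagation is under control, the remaining assertions fall out from the two local facts (a) and (b).
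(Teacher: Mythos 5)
Your strategy is the same as the paper's (local double--cover models at smooth points of $R$, plus a multiplicity chase through the even resolution and the contraction $\rho$, with semistability violated either by an uncontractible multiple component of $\tilde F$ or by a point worse than a node), but the decisive step is precisely the ``bookkeeping'' you defer and label the hard part, and without it (3), (5) and the $g$ odd case of (1) are not proved. The subtle point your sketch glosses over is that a component of $\tilde F$ of multiplicity $\geq 2$ \emph{can} be absorbed by $\rho$: if an exceptional curve $E_i$ with $\mathrm{mult}_{E_i}(\tilde\Gamma)\geq 2$ is a $(-2)$-curve contained in $\tilde R$, its preimage is a $(-1)$-curve and is contracted. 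So your alternative ``some $E_i$ attains $m_i\ge 2$ and $\tilde F$ carries a multiplicity $\ge 2$ component that survives all the contractions'' is exactly the assertion that needs an argument. The paper supplies it in two short steps: in (3), after two blow-ups one has $\tilde\Gamma_2=\Gamma+2E_2+E_1$, semistability forces the preimage $B_2$ to be a $(-1)$-curve, hence $E_2$ is a $(-2)$-curve inside $\tilde R$ and (by Lemma \ref{1.3}) the infinitely near point $p_1$ is a singularity of type $(k\to k)$ with $k$ odd; but then $E_2$ carries a singularity of order $k+1$, and blowing it up gives $\tilde\Gamma_3=\Gamma+2E_3+2E_2+E_1$ with $E_3\not\subset\tilde R$, so $B_3$ is an irreducible multiplicity-two component with $B_3^2\le -2$ which no contraction can remove. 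The same two-step escalation is what proves uniqueness of $E_q$ in (5) and rules out $\Gamma\subset R$ for $g$ odd in (1) (there the $(g+2\to g+2)$ singularity forces, after two blow-ups, a multiplicity-two $E_2\not\subset\tilde R$ with $B_2^2\le-2$). Until you carry this propagation out, what you have is a plan rather than a proof.

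A second, smaller gap is your treatment of (4). There $E$ is the exceptional curve \emph{through} the infinitely near point $q$ (created by an earlier blow-up), not the exceptional curve of the blow-up \emph{at} $q$, so the claim ``$E\subset\tilde R$ iff $\ord_q(R)$ is odd'' is not the parity rule for reduced even inverse images; read that way the statement would be either false in general or vacuous. The even-order clause is a genuine consequence of semistability: the paper proves it by showing that if $\ord_q(R)$ were even with $E\subset\tilde R$, the blow-up at $q$ would produce a branch exceptional curve meeting $E$ in a singular point of the branch locus, forcing $E^2\le-4$ in $\tilde P$ and hence an uncontractible multiple component of $\tilde F$. Only the odd-order dichotomy in (4) is the quick appeal to Lemma \ref{1.3} that you make. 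Your reductions of (2) and (6) to the local model $y^2=x^n$, and of the first half of (1) to $B_\Gamma^2=\tfrac12(\Gamma^\dagger)^2$, do match the paper's proof.
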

\begin{proof}
(1). Suppose $\Gamma\subseteq \tilde R$, then $B$ is a component of
$\tilde F$ with multiplicity 2, for $\pi^*(\Gamma)=2B$, furthermore,
$B^2=\Gamma^2/2$. If $\Gamma^2\leq-4$, then $B^2\leq -2$, hence $B$
is a multiple component in $\tilde F$ which can not be contracted,
contradicting with the assumption that $F$ is semistable. Thus we
get that if $\Gamma\subseteq \tilde R$, then $\Gamma$ is a
$(-2)$-curve in $\tilde P$.

By Lemma \ref{5.1.2}, we know that if $g$ is even then $\Gamma$ is
as in (1) in Lemma \ref{5.1.2}. And if $g$ is odd, then any
singularity of $R$ is of type $(g+2\to g+2)$, and we need twice
blow-up so that the intersection point of $\Gamma$ with the
exceptional curve is a smooth point of $R$. Hence there is a
$(-1)$-curve, say $E_2$, with multiplicity 2 in $\tilde \Gamma$.
It's easy to see that $E_2$ is not contained in $\tilde R$, and
$\pi^*(2E_2)=2B_2$ in $F$ is irreducible with $B_2^2\leq-2$,
therefore $B_2$ is an un-contractible multiple component in
semistable curve $\tilde F$, which is impossible. In a word, when
$g$ is odd, $\Gamma$ is not contained in $R$.

(2). For what follows, we assume that $\Gamma$ is not contained in
$R$ since (1). Let $n=(R,\Gamma)_p$, we take the local coordinate
$(x,t)$ of $p$ such that the local equations of $\Gamma$ and $R$ at
$p$ are $ t=0$ and $t+x^n=0$ respectively. Then the local equation
of $F$ in $S$ is $y^2-x^n=0$. If $n\geq 3$, it is a singularity of
type $A_{n-1}$ on $F$, and then $F$ is not semistable.

(3). Suppose not, thus $(R,\Gamma)_p>\ord_p(R)$. Let $\psi_1$ be a
blow-up at $p$, and $E_1$ be the exceptional curve. Then the
intersection point $p'$ of $\Gamma$ with $E_1$ is still on $R$. Let
$\psi_2$ be the successive blow-up centered at $p_1$, and $E_2$ the
exceptional curve. Then the total transform of $\Gamma$ by
$\psi_1\circ\psi_2$ is
\begin{equation*}
\tilde\Gamma_2=\Gamma+2E_2+E_1,
\end{equation*}
and $B_2$ is with multiplicity at least 2 in $F$. Hence $B_2$ must
be a $(-1)$-curve in $\tilde S$, $E_2$ a $(-2)$-curve in $\tilde R$,
and $p_1$ must be a singularity of type $(k\to k)$ ~($k$ is odd)
(Lemma \ref{5.1.2}). Furthermore, there is a singularity $p_2$ on
$E_2$ of order $k+1$. Let $\psi_3$ be the blow-up centered at $p_2$
with exceptional curve $E_3$. Then
\begin{equation*}
\tilde\Gamma_3=\Gamma+2E_3+2E_2+E_1,
\end{equation*}
$E_3$ is not contained in $\tilde R$, and $B_3$ is an un-contractile
multiple component in $\tilde F$.

(4). Suppose $\ord_q(R)$ is even and $E$ is contained in $\tilde R$,
then $\ord_q(R_i)$ is odd. Let $\psi: P_{i+1}\to P_i$ be the blow-up
centered $q$ with exceptional curve $E'$ lying in branch locus, then
the intersection point $E'\cap E$ is a singularity of $\tilde
R_{i+1}$, and $E^2\leq-2$ in $P_{i+1}$. Hence $E^2\leq-4$ in $\tilde
P$, and then $B$ is an un-contractile multiple component in $F$.
Consequently, we proved the first part of (4).

The second part of (4) is a direct corollary of Lemma \ref{1.3}.

(5). Suppose $E_1$ and $E_2$ are both through $q$. When $\ord_q(R)$
is even, then the exceptional curve $E_3$ of the blow-up at $q$ is
of multiplicity at least 2, and $E_3$ is not contained in $\tilde
R$. So $B_3$ is an un-contractile multiple component in $F$. When
$\ord_q(R)=k$ is odd, then $q$ should be of type $(k\to k)$, and
$E_3$ is contained in $\tilde R$ and of multiplicity at least 2.
Blowing up the infinitely near singularity $q'$ of $q$, then the
exceptional curve $E_4$ is not contained in $\tilde R$ of
multiplicity at least 2, which is impossible. The proof of the
second part of (5) is analogous to that of (3).

(6). The proof of the second part is the same as that of (1), and
the rest is the same as that of (2). We omit the detail.
\end{proof}
\begin{remark}
Let $F$ be a semistable fiber of $f$, and $p$ be a singularity of
$R$. Then there is exactly one curve $E_p$ passing through $p$
(Lemma \ref{tu} (3)-(5)). We call $E_p$ {\it the exceptional curve
through $p$}. Note that $E_p$ is either $\Gamma$ or an exceptional
curve.
\end{remark}
\begin{corollary}\label{g+2=0}
If $F$ is a semistable hyperelliptic fiber of genus $g$, then
$s_{g+2}(F)=0$.
\end{corollary}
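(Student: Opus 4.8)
The goal is to show that the set $\CR_{g+2}(F)$ of Definition \ref{defnsingularindex} is empty for a semistable fibre $F$; then $s_{g+2}(F)=|\CR_{g+2}(F)|=0$ directly from that definition. By definition a member of $\CR_{g+2}(F)$ is a singularity of $R$ of order $g+2$ lying over $f(F)$ (of type $(g+2\to g+2)$ if $g$ is odd, and not belonging to a singularity of type $(g+1\to g+1)$ or $(g+3\to g+3)$ if $g$ is even), so it suffices to locate the singularities of $R$ of order $g+2$.

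First I would bound the orders occurring in the minimal even resolution. By the definition of a genus $g$ datum the singularities of $R_h$ have order at most $g+1$, and by Lemma \ref{chooselemma} a singularity of $R$ in $P$ has order $g+2$ only when $R$ contains the fibre through it. Next, from the formula $R_1=\psi_1^*(R)-2[m/2]E$ defining the reduced even inverse image, blowing up a singularity of order $m$ produces on the exceptional curve singularities of order at most $m$ if $m$ is even and at most $m+1$ if $m$ is odd, the value $m+1$ being attained only when the whole multiplicity concentrates at a single point — exactly the situation of Definition \ref{kktype}. Combining these: when $g$ is odd, $g+1$ is even, so starting from order $\le g+1$ the order never reaches $g+2$, hence there is no singularity of order $g+2$ at all and $\CR_{g+2}(F)=\emptyset$. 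When $g$ is even and $\Gamma\not\subseteq R$, a singularity of order $g+2$ can arise only by concentrating the multiplicity of a singularity of (odd) order $g+1$, so such a point is the companion $x_{i+1}$ of a singularity of type $(g+1\to g+1)$ and is excluded from $\CR_{g+2}(F)$; moreover no further singularity of order $g+2$ can be infinitely near it, since after that blow-up the horizontal part has order $\le g+1$ and the exceptional curve is not in the branch locus. Hence again $\CR_{g+2}(F)=\emptyset$.

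It remains to treat $g$ even with $\Gamma\subseteq R$. Here Lemma \ref{tu}(1), together with our standing assumption that $R^2$ is minimal (which excludes case (1) of Lemma \ref{5.1.2}), forces $\Gamma$ to be the fibre of case (2) of Lemma \ref{5.1.2}: $R_h$ meets $\Gamma$ in a single point $x$ where the local configuration is a singularity of type $(g+1\to g+1)$ tangent to $\Gamma$. Resolving at $x$ and following the strict transforms of $R_h$, of $\Gamma$ and of the exceptional curves, one checks that the unique singularity of $R$ of order $g+2$ over $f(F)$ is the point produced after the first blow-up, that it is the companion $x_{i+1}$ of the type $(g+1\to g+1)$ singularity attached to $x$, and that the resolution then continues with all orders $\le g+1$. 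So $\CR_{g+2}(F)=\emptyset$ in this case too, whence $s_{g+2}(F)=0$ in general.

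The routine parts are the two multiplicity bounds, immediate from the definition of the reduced even inverse image. The main obstacle is the last case $\Gamma\subseteq R$: one must run the local resolution of Lemma \ref{5.1.2}(2) explicitly and confirm — in the spirit of the remark after Definition \ref{defnsingularindex}, where $x_i$ and $x_{i+1}$ are grouped into one singularity of type $(2k+1\to 2k+1)$ — that the order-$(g+2)$ singularity created there contributes to $s_{g+1}$ rather than to $s_{g+2}$.
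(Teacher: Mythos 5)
Your argument has a genuine gap in the odd-genus case: you never treat $g$ odd with $\Gamma\subseteq R$. For $g$ odd, $s_{g+2}(F)$ counts singularities of type $(g+2\to g+2)$, and by Lemma \ref{chooselemma} a point of order $g+2$ can only occur when $R$ contains the fibre through it; Lemma \ref{5.1.2} (odd case) shows that such fibres do occur for non-semistable data, and each one carries exactly the type-$(g+2\to g+2)$ singularity that $s_{g+2}$ counts. Your parity bookkeeping (``starting from order $\le g+1$ the order never reaches $g+2$'') is only valid when $\Gamma\not\subseteq R$, so the conclusion ``there is no singularity of order $g+2$ at all'' does not follow; indeed it is false without semistability, since Xiao's indices allow $s_{g+2}\neq0$ in general. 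The missing ingredient is precisely part (1) of Lemma \ref{tu}: for a semistable fibre with $g$ odd, $\Gamma$ is not contained in $R$. This is exactly how the paper argues, and any proof that never invokes semistability in the odd case cannot be complete.

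Your treatment of the remaining case ($g$ even, $\Gamma\subseteq R$) is also problematic. The paper disposes of it by showing it cannot occur: by Lemma \ref{tu}(1) the fibre would have to be as in Lemma \ref{5.1.2}, case (1) is excluded by the minimal-$R^2$ choice, and the tangency in case (2) contradicts the semistability criterion that intersection numbers equal orders (Lemma \ref{tu}(3)); so there is nothing to resolve. You instead assume the configuration can be semistable and claim the order-$(g+2)$ point is ``the companion $x_{i+1}$ of a type $(g+1\to g+1)$ singularity'' and hence excluded from $\CR_{g+2}(F)$. That misreads Definition \ref{kktype}: the companion $x_{i+1}$ is by definition an infinitely near point created by blowing up an odd-order singularity of $R$, whereas the point in question lies in $P$ itself and has even order $g+2$ as a singularity of $R$; the type-$(g+1\to g+1)$ structure asserted in Lemma \ref{5.1.2} is a property of $R_h$, not of $R$, so the exclusion clause in Definition \ref{defnsingularindex}(3) does not obviously apply, and the decisive local computation is left as ``one checks''. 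Both defects are repaired at once by the paper's short argument: quote Lemma \ref{tu}(1) for $g$ odd, and Lemma \ref{tu}(1) together with (3) (plus the minimal-$R^2$ remark) for $g$ even.
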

\begin{proof}
From Lemma \ref{tu} (1), we know that if $g$ is odd, then
$s_{g+2}(F)=0$; if $g$ is even, then $\Gamma$ is the fiber in Lemma
\ref{5.1.2}, but it is impossible by Lemma \ref{tu} (3), and  then
$s_{g+2}(F)=0$.
\end{proof}

\subsection{Proof of Theorem \ref{mainthm}}

%
%
We first consider the effect of the smooth points of $R$ to the
arithmetic genus.
\begin{lemma}\label{nodelem}
Let $F$ be a semistable fiber of $f$. Assume that the image $\Gamma$
in $P$ of $F$ is not contained in $R$. Suppose all the intersection
points $p_1,\ldots,p_{k_1}$, $q_1,\ldots,q_{k_2}$ of $R$ on $\Gamma$
are smooth, where $(\Gamma,R)_{p_i}=2$ and $(\Gamma,R)_{q_j}=1$.

1). If $k_2\neq0$, then $F$ is an irreducible curve with $k_1$ nodes
corresponding to $p_i$, the geometric genus of $F$ is $[(k_2-1)/2]$,
and
\begin{equation*}
p_a(F)=[(\Gamma R-1)/2]=[(2k_1+k_2-1)/2]=k_1+[(k_2-1)/2].
\end{equation*}

2). If $k_2=0$, then $F$ is composed of two smooth rational curves
meeting with each other at $k_1$ distinct points, thus
\begin{equation*}
F=\tilde\Theta^*(\Gamma)+\sum_{i=1}^{k_1}\tilde\Theta^*(E_i)
=(B'+B'')+\sum_{i=1}^{k_1}B_i,
\end{equation*}
where every irreducible component is a smooth rational curve, $B_i$
meets $B'$ and $B''$ normally at one point respectively for each
$1\leq i\leq k_1$, and there is no other intersection. Hence
\begin{equation*}
p_a(F)=[(\Gamma R-1)/2]=k_1-1.
\end{equation*}
\end{lemma}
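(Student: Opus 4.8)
The plan is to reduce the statement to a purely local analysis of the double cover $\tilde\theta$ along $\Gamma$, and then read off the global structure of $F$ from a monodromy count. First I would note that, since by hypothesis $R$ has no singular point on $\Gamma$, the minimal even resolution $\tilde\psi$ blows up nothing on (or infinitely near) $\Gamma$, so $\tilde\Gamma=\Gamma$ and $\tilde F=\tilde\theta^*(\Gamma)=\tilde\theta^{-1}(\Gamma)=:B$, which is reduced because $\Gamma\not\subseteq\tilde R$. Since $\tilde R$ is smooth near $\Gamma$, the surface $\tilde S$ is smooth near $B$; and by Lemma \ref{1.3} there is no vertical $(-1)$-curve of $\tilde S$ over $t=f(F)$, because such a curve would have to come from an odd-order singularity of $R$ (there is none on $\Gamma$) or from a fiber as in Lemma \ref{5.1.2} (excluded, since $\Gamma\not\subseteq R$ and for $g\geq2$ that configuration needs a point of $R_h$ on $\Gamma$ of multiplicity $g+1\geq3$). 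Hence $\rho$ is an isomorphism near $\tilde F$, so $F=\tilde F=B$; in particular $F$ and $B$ have the same components and singularities.

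Next I would write down the local normal forms of the degree-two map $B\to\Gamma$. In a local coordinate $(x,t)$ on $P$ with $\Gamma=\{t=0\}$, over a point of $\Gamma$ off $R$ the cover is \'etale; at a point $q_j$ with $(\Gamma,R)_{q_j}=1$ it is, after a coordinate change, the curve $\{y^2=x\}$ mapping onto $\{t=0\}$, a simple branch point, so $B$ is smooth there; and at a point $p_i$ with $(\Gamma,R)_{p_i}=2$ it is $\{y^2=x^2\cdot(\mr{unit})\}$, i.e.\ two smooth branches crossing transversally, so $B$ has a node over $p_i$, each branch mapping isomorphically onto $\Gamma$ nearby (this is also forced by the semistability of $F$ via Lemma \ref{tu}). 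Thus the singular locus of $B$ is exactly the set of $k_1$ nodes over $p_1,\dots,p_{k_1}$, and $B$ is smooth everywhere else. Finally, since $(P,R,\delta)$ is a genus $g$ datum, $\Gamma R=2g+2$, so $2k_1+k_2=2g+2$ and in particular $k_2$ is even.

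The two cases are then distinguished by the monodromy of $B\setminus\{\mr{points over }p_i\}\to\Gamma\setminus\{p_1,\dots,p_{k_1}\}$, which is trivial around each $p_i$ and is a transposition around each $q_j$. If $k_2\neq0$ this covering is connected, so $B$ is irreducible; its normalization $\tilde B\to B$ is then the double cover of $\Gamma\cong\mP^1$ branched exactly over $q_1,\dots,q_{k_2}$ (the nodes over the $p_i$ are unramified and contribute nothing), so Riemann--Hurwitz gives $g(\tilde B)=k_2/2-1=[(k_2-1)/2]$, the geometric genus of $F$; adding back the $k_1$ nodes yields $p_a(F)=[(k_2-1)/2]+k_1=[(2k_1+k_2-1)/2]=[(\Gamma R-1)/2]$, the last equalities holding because $2k_1$ is even. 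If $k_2=0$ the covering is trivial, so (the local monodromies all being trivial) the two sheets close up to components $B',B''\cong\mP^1$ meeting transversally at the $k_1$ points over the $p_i$ and nowhere else; hence $F=B'+B''$ and $p_a(F)=p_a(B')+p_a(B'')+B'B''-1=k_1-1=[(\Gamma R-1)/2]$. As a sanity check, in both cases $p_a(F)=g$, which also follows from flatness of $\tilde S\to C$ together with $F=\tilde F$.

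I expect the only delicate point to be the first step --- confirming that neither the minimal even resolution nor the contraction $\rho$ touches the fiber over $t$, so that literally $F=\tilde\theta^{-1}(\Gamma)$ --- after which everything is a routine local computation for double covers and an elementary monodromy count; one should just be careful that the local picture at $p_i$ is genuinely a node rather than, say, a tacnode hiding a base point, which is guaranteed because $p_i$ is a smooth point of $R$ and $F$ is semistable.
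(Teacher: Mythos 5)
Your argument is correct and complete; the paper gives no proof of this lemma (it is dismissed as obvious), so there is no written argument to compare yours against. Your key reduction --- since $R$ has no singular point on $\Gamma$, neither the minimal even resolution nor the contraction $\rho$ touches anything over $t=f(F)$ (no exceptional curves arise there, and by Lemma \ref{1.3} no vertical $(-1)$-curve of $\tilde S$ lies over $t$), so that literally $F=\tilde\theta^{-1}(\Gamma)$ --- is exactly the point that makes the rest routine, and your local normal forms ($y^2=xu$ at the $q_j$, $y^2=x^2u$ at the $p_i$, hence nodes precisely over the $p_i$), the evenness of $k_2$ from $\Gamma R=2g+2$, the monodromy count distinguishing the two cases, and the Riemann--Hurwitz plus node-count computation of $p_a$ are all accurate.

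One remark worth recording: in case 2 your structural conclusion ($F=B'+B''$, two smooth rational curves meeting transversally at the $k_1$ points lying over the $p_i$) agrees with the first sentence of the lemma but not with its displayed decomposition $F=\tilde\Theta^*(\Gamma)+\sum_{i=1}^{k_1}\tilde\Theta^*(E_i)$, which exhibits $k_1$ additional components $B_i$. Under the hypotheses as stated the $p_i$ are smooth points of $R$, so the even resolution blows up nothing over them and there are no exceptional curves $E_i$ on this fiber; your configuration is the correct one. Since both configurations have arithmetic genus $k_1-1$, and only the value $p_a(F)=[(\Gamma R-1)/2]$ is used later (in the induction of Lemma \ref{lem3.11}), this discrepancy in the statement does not affect anything downstream, but your proof settles which picture is actually the right one.
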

\begin{proof}
The proof is obvious, and we omit it.
\end{proof}
Then we consider the effect of the singularities.
\begin{lemma}\label{lem3.11}
Suppose $F$ is a semistable fiber of $f$, and the image $\Gamma$ in
$P$ of $F$ is not contained in $R$. If $p$ is a singularity of $R$
such that the exceptional curve $E_p$ through $p$ is not contained
in the branch locus. Then the arithmetic genus of the block of $F$
from $p$ is
 \begin{equation}\label{eqpa}
p_a(F_{p})=\Bigg[\frac{(E_p,R)_p-1}{2}\Bigg].
\end{equation}
\end{lemma}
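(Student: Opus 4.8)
The plan is to compute $p_a(F_p)$ from an explicit description of the block $F_p$, pinning down the minimal even resolution at $p$ by means of the semistability criterion Lemma~\ref{tu} and then transporting everything through the double cover $\tilde\theta$ and the contraction $\rho$. Write $m:=\ord_p(R)$; by Lemma~\ref{tu}(3) when $E_p=\Gamma$ and by Lemma~\ref{tu}(5) when $E_p$ is an exceptional curve, one has $(E_p,R)_p=m$, so the assertion is $p_a(F_p)=[(m-1)/2]$.

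\textbf{Step 1 (shape of the block).} I would first show that the exceptional curves $E_1,\dots,E_{l_p}$ produced by resolving $p$ form a tree of $\mathbb{P}^1$'s, that each $E_i$ enters $\tilde\Gamma$ with multiplicity $m_i=1$ (so that $\CE_p=\sum_i E_i$ is a reduced tree of rational curves, whence $p_a(\CE_p)=0$), and that
\begin{equation*}
\CE_p\cdot\tilde R \;=\; 2\,[m/2].
\end{equation*}
The multiplicity claim follows because blowing up $p$ separates the strict transform of $E_p$ from $R$ (since $(E_p,R)_p=m=\ord_p(R)$), so every subsequent center lies on exactly one exceptional curve and off the strict transform of $E_p$ (Lemma~\ref{tu}(5)); the intersection identity is tracked through the reduced even inverse images, Lemma~\ref{tu}(2),(6) bounding the residual intersections (alternatively it drops out of $\tilde\Gamma\cdot\tilde R=\Gamma\cdot R=2g+2$ after removing the contribution of the strict transform of $\Gamma$). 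Moreover, by Lemma~\ref{tu}(4) the $E_i$ not contained in $\tilde R$ come from even-order centers, while the $E_i\subset\tilde R$ come from odd-order centers of type $(k\to k)$ and are isolated $(-2)$-curves in $\tilde R$ (Lemma~\ref{1.3}); in particular, if $m$ is even no $E_i$ lies in $\tilde R$, whereas if $m$ is odd then $p$ itself is of type $(m\to m)$, the resolution is a short chain, and the curve $E_1$ blown up from $p$ lies in $\tilde R$.

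\textbf{Step 2 (double cover and contraction).} From $K_{\tilde S}=\tilde\theta^{*}(K_{\tilde P}+\tilde\delta)$ with $\tilde\delta^{\otimes2}\cong\CO_{\tilde P}(\tilde R)$ and $\CB_p=\tilde\theta^{*}\CE_p$, adjunction gives
\begin{equation*}
p_a(\CB_p)\;=\;2\,p_a(\CE_p)-1+\tfrac12\,\tilde R\cdot\CE_p\;=\;[m/2]-1 .
\end{equation*}
If $m$ is even, $\CB_p=\tilde\theta^{-1}(\CE_p)$ is reduced and each component is a (possibly split) double cover of a $(-1)$- or $(-2)$-curve, hence of self-intersection $\le-2$; since $\rho$ contracts only $(-1)$-curves (Lemma~\ref{1.3}) it restricts to an isomorphism on $\CB_p$, so $F_p\cong\CB_p$ and $p_a(F_p)=[m/2]-1=[(m-1)/2]$. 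If $m$ is odd, $E_1\subset\tilde R$ gives $\tilde\theta^{*}E_1=2B_1$ with $B_1\cong\mathbb{P}^1$ a $(-1)$-curve that $\rho$ contracts, and $B_1$ meets the reduced remainder $\sum_{E_i\not\subset\tilde R}\tilde\theta^{-1}(E_i)$ in a single point; then $F_p=\rho(\CB_p)=\rho(\mathrm{remainder})$, and since $\chi(\CO_{2B_1})=3$ while the scheme-theoretic overlap of $2B_1$ with the remainder has length $2B_1\cdot(\mathrm{remainder})=2$, one gets $\chi(\CO_{\CB_p})=\chi(\CO_{F_p})+1$, i.e.\ $p_a(F_p)=p_a(\CB_p)+1=[m/2]=[(m-1)/2]$.

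The principal obstacle is Step~1: squeezing out of Lemma~\ref{tu} the precise local configuration — the tree structure with all $m_i=1$, the identity $\CE_p\cdot\tilde R=2[m/2]$, and (in the odd case) that exactly one $E_i$ lies in $\tilde R$ and meets the remainder in one point — after which Step~2 is routine adjunction and Euler-characteristic bookkeeping. In the write-up I would run the two cases $m$ even and $m$ odd (the latter forcing $p$ of type $(m\to m)$) separately, drawing the resolution pictures as in Examples~\ref{examplesingindex}--\ref{example}; there $p_2$ (order $4$) yields $F_{p_2}=B_2'+B_2''$ with $p_a=1=[(4-1)/2]$, and $p_3$ (type $(3\to3)$) yields $F_{p_3}=B_{32}$, an elliptic curve, with $p_a=1=[(3-1)/2]$.
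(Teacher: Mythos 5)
Your overall strategy (adjunction on the double cover: $p_a(\CB_p)=2p_a(\CE_p)-1+\tfrac12\,\tilde R\cdot\CE_p$, together with $p_a(\CE_p)=0$ and $\CE_p\cdot\tilde R=2[m/2]$, then comparing $p_a(F_p)$ with $p_a(\CB_p)$ under $\rho$) is genuinely different from the paper, which instead argues by induction on the length $l_p$ of the minimal even resolution at $p$, using Lemma \ref{nodelem} for the last exceptional curve and adding up the genera of the sub-blocks $F_{q_i}$ over the infinitely near singularities. Your numerical identities are in fact true, but the structural claims on which Step 2 rests are not: it is false that when $m=\ord_p(R)$ is even no $E_i$ of the block lies in $\tilde R$, and false that when $m$ is odd the resolution is a short chain with exactly one such $E_i$. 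The parity of $\ord_p(R)$ says nothing about the \emph{infinitely near} singularities: the cluster over $p$ may contain odd-order singularities of type $(2k_j+1\to 2k_j+1)$ nested arbitrarily deep (these are exactly the points $q_j$ with $(R,E_2)_{q_j}=2k_j+1$ that the paper's induction explicitly allows), and each of them produces an isolated $(-2)$-curve inside $\tilde R$ (Lemma \ref{tu}(4), Lemma \ref{1.3}), hence a multiplicity-two $(-1)$-curve in $\CB_p$ that $\rho$ contracts. For instance, a point $p$ with $\ord_p(R)=4$ whose strict transform acquires a type-$(3\to3)$ point on $E_1$ is compatible with all parts of Lemma \ref{tu}; there $\CB_p=B_1+2B_2+B_3$ with $B_2$ a contracted $(-1)$-curve, so $\CB_p$ is non-reduced and $\rho$ is not an isomorphism on it, contrary to your even case.

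This is not just a cosmetic omission, because your bookkeeping in Step 2 keys the correction $p_a(F_p)-p_a(\CB_p)$ to the parity of $m$ via the presence or absence of a single contracted curve. In general, if a contracted double $(-1)$-curve meets the rest of the block in $d$ points, the contraction changes the arithmetic genus by $2-d$: one gets $+1$ only for the ``root'' curve $E_1\subset\tilde R$ when $m$ is odd (where $d=1$, its second intersection being with $E_p$ outside the block), while each nested branch-locus curve has $d=2$ and contributes $0$. So the final answer does survive, but proving this — and proving the asserted identity $\CE_p\cdot\tilde R=2[m/2]$, which your sketch does not really justify (the global computation $\tilde\Gamma\cdot\tilde R=2g+2$ only controls the sum over all $p$, and only after one knows where $\Gamma$ meets the branch-locus exceptional curves) — requires exactly the inductive control over the nested odd-order centers that the paper's proof of Lemma \ref{lem3.11} carries out and that your write-up waves at in Step 1. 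As it stands, the argument is incomplete precisely at the configurations your case division excludes.
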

 \begin{proof}
We use induction on  the length $l_p$ of the minimal even resolution
$\tilde\psi_p$ of $R$ at $p$. Note that
$\big[\big((2k+2)-1\big)/2\big]=\big[\big((2k+1)-1\big)/2\big]=k$,
and $\ord_p(R)=(R,E_p)_p$ for any singularity of $R$ on $E_p$  from
Lemma \ref{tu}. We may assume $E_p$ is $\Gamma$, since the proof for
exceptional curves is similar.

If $l_p=1$, then $\ord_p(R)=2k+2$ is even and $p$ is an ordinary
singularity. The exceptional curve $E_1$ from $p$ is not contained
in $R$, and $E_1$ meets $R$ in $P_1$ transversely at $2k+2$ distinct
points. Hence $\CE_p=E_1$, and $\CB_p=B_1$ with $p_a(B_1)=k$.

If $l_p=2$ and $\ord_p(R)=2k+2$ is even, then there is exactly one
infinitely near singularity $p_1$ of $R$ in $P_1$, which is an
ordinary singularity of even order, say $2k_2$. Hence $E_1,E_2$ are
not contained in $R$, $\CE_p= E_1+E_2$, and $\CE_p$ meets $R$ in
$P_2$ transversely at $2k+2$ distinct points. Let $E_1R=2k_1+2$,
then $k_1+k_2=k$. Thus $p_a(B_1)=k_1$, $p_a(B_2)=k_2-1$, and $B_1$
intersects with $B_2$ at two points transversely.
\begin{equation*}
p_a(F_p)=p_a(\CB_p)=p_a(B_1)+p_a(B_2)+1=k.
\end{equation*}

If $l_p=2$ and $\ord_p(R)=2k+1$ is odd, then $p$ is a singularity of
$(2k+1\to 2k+1)$. So $E_1$ is contained in $R$, $E_2$ is not
contained in $R$, and $\CE_p=E_1+E_2$, where $E_2$ meets $R_2$ in
$P_2$ transversely at $2k+2$ distinct points. It's easy to see that
$B_1$ is a $(-1)$-curve and $B_2$ is a smooth curve with genus $k$.
Hence
\begin{equation*}
p_a(F_p)=p_a(B_2)=k.
\end{equation*}

Assume that (\ref{eqpa}) holds for any non-negative integer $l<
l_p$. We want to prove (\ref{eqpa}) holds for $l_p$.

If $\mathrm{ord}_p(R)=2k+1$ is odd, let $\psi_1:P_1\to P$ be the blowing-up at
$p$. Then there is exactly one infinitely near singularity $q$ of
$R$ in $P_1$, and $(R,E_1)_q=2k+1,~\ord_q(R_1)=2k+2$. Let
$\psi_2:P_2\to P_1$ be the successive blowing-up at $q$. It is clear
that $E_1$ is contained in $R$, but $E_2$ is not.

Let $q_1,\ldots,q_\alpha$ be all the infinitely near singularities
of $q$ in $P_2$. Hence $l_{q_i}<l_p$ for $1\leq i\leq \alpha$.
Suppose $q_1,\ldots,q_\beta~(\beta\leq\alpha)$ are all the
singularities with even order. Let $(R,E_2)_{q_i}=2k_i+2$ for $1\leq
i\leq \beta$, and let $(R,E_2)_{q_j}=2k_j+1$ for $\beta+1\leq
j\leq\alpha$. Let the total intersection number of $R$ with $E_2$ at
all the smooth points of $R$ in $P_2$ be $(R,E_2)_{\mathrm{sm}}$.
Then
\begin{equation*}
\begin{split}
2k+2&=\sum_{i+1}^\beta (2k_i+2)+\sum_{j=\beta+1}^\alpha(2k_j+1)+(R,E_2)_{\mathrm{sm}}+1\\
&=2(k_1+\cdots+k_\beta)+2\beta+2(k_{\beta+1}+\cdots+k_{\alpha})+(\alpha-\beta)+(R,E_2)_{\mathrm{sm}}+1\\
&=2(k_1+\ldots+k_{\alpha})+(R,E_2)_{\mathrm{sm}}+(\alpha+\beta)+1.
\end{split}
\end{equation*}
Hence
\begin{equation}\label{l1}
k=\Big(\sum_{i=1}^\alpha
k_i\Big)+\frac{(R,E_2)_{\mathrm{sm}}+\alpha+\beta-1}2.
\end{equation}
It is easy to see that in $\tilde P$,
\begin{equation*}
\tilde R E_2=(R,E_2)_{\mathrm{sm}}+(\alpha-\beta)+1.
\end{equation*}
By Lemma \ref{nodelem},
\begin{equation}\label{l2}
p_a(B_2)=\Big[\frac{\big((R,E_2)_{\mathrm{sm}}+(\alpha-\beta)+1\big)-1}2\Big]
=\frac{(R,E_2)_{\mathrm{sm}}+\alpha-\beta-1}2.
\end{equation}
The block of $\tilde\Gamma$ from $p$ is
\begin{equation*}
\CE_p=E_1+E_2+\sum_{i=1}^{\alpha}\CE_{q_i}.
\end{equation*}
Combining the equations (\ref{l1}) and (\ref{l2}), then
\begin{equation}
\begin{split}
p_a(F_p)&=p_a(\CB_{p}-2B_1)=p_a(B_2)+\Big(\sum_{i=1}^\alpha p_a(F_{q_i})\Big)+\beta\\
&=\frac{(R,E_2)_{\mathrm{sm}}+\alpha-\beta-1}2+\Big(\sum_{i=1}^\alpha
k_i\Big)+\beta\\
&=k,
\end{split}
\end{equation}
where the block $F_{q_i}$ intersects with $B_2$ at two points, and
adds one to the arithmetic genus for each $1\leq i\leq\beta$. Here
we used the induction assumption.

If $\mathrm{ord}_p(R)=2k+2$ is even, take $\psi_1:P_1\to P$ the blow-up at $p$.
Let $q_1,\ldots,q_\alpha$ be all the infinitely near singularities
of $p$ on $p_1$. Then the rest of the proof is the same as the odd
case above.
 \end{proof}

Now we can prove the identities between singularity indices
(Definition \ref{defnsingularindex}) with modular invariants
$\delta_i(F),\xi_j(F)$ (see (\ref{xi0eqn}) - (\ref{delta0eqn})).

 \begin{theorem}
Let $f:S\to C$ be a semistable hyperelliptic fibration of genus $g$,
and $F$ be any singular fiber of $f$, then
\begin{equation}
s_{2k+1}(F)=\delta_k(F)~(k\geq1),~~ s_{2k+2}(F)=\xi_k(F)~(k\geq0).
\end{equation}
\end{theorem}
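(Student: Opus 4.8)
The statement asserts an equality of two collections of local invariants attached to a single semistable fiber $F$: on the one side the singularity indices $s_{2k+1}(F)$, $s_{2k+2}(F)$ coming from the branch locus $R$ of the associated genus $g$ datum, on the other side the modular quantities $\delta_k(F)$, $\xi_k(F)$ counting nodes (or nodal pairs) of the various types in $F$. The natural strategy, already announced in the introduction, is to set up explicit bijections between the sets $\CR_*(F)$ enumerating contributions to $s_*(F)$ and the sets $\CN_*(F)$ enumerating nodes/nodal pairs of $F$. Concretely, I would first invoke the semistable criterion (Lemma \ref{tu}) and Corollary \ref{g+2=0} to reduce to the case where $\Gamma$ is not contained in $R$ (the even case with $\Gamma\subseteq R$ being excluded), so that the modular decomposition $F=F_{p_0}+F_{p_1}+\cdots+F_{p_e}$ is available, with $\CB_{p_0}=\tilde\theta^*(\Gamma)$ and each $F_{p_i}$ the block of $F$ from the singularity $p_i$ of $R$ on $\Gamma$.

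\textbf{Main steps.} The plan is to analyze each block $F_p$ separately. For a singularity $p$ of $R$ whose exceptional curve $E_p$ is not contained in the branch locus, Lemma \ref{lem3.11} computes $p_a(F_p)=[((E_p,R)_p-1)/2]$; combined with Lemma \ref{tu}(3), which gives $(E_p,R)_p=\ord_p(R)$, this pins down the arithmetic genus of the block in terms of the order of the singularity. One then has to read off, from the double-cover geometry, how the block $F_p$ attaches to the rest of $F$: the key point is that $\CE_p$ meets the rest of $\tilde\Gamma$ (essentially $\Gamma$, or rather $B$) in a controlled way, so that contracting $(-1)$-curves produces a subcurve meeting $B$ in exactly the right number of points to create a node (or nodal pair) of the predicted type. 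For a singularity of even order $2k+2$ that is not of type $(2k+1\to 2k+1)$ or $(2k+3\to 2k+3)$, the block $F_p$ should glue to $B$ at two points $q,\sigma(q)$ forming a nodal pair of type $(0,k)$, matching $s_{2k+2}(F)=\xi_k(F)$; for a singularity of odd type $(2k+1\to 2k+1)$, the block is a smooth curve of genus $k$ meeting $B$ at a single point, giving a node of type $k$ and matching $s_{2k+1}(F)=\delta_k(F)$. The contribution of the smooth points of $R$ on $\Gamma$ to $\delta_0(F)$ is handled by Lemma \ref{nodelem}: points with $(\Gamma,R)_p=2$ produce the inseparable nodes counted by $|\CN_{2,1}|$ or $|\CN_{2,2}|$ (according to whether $F$ is irreducible or splits), i.e. the $s_2(F)$ part of the count via $\CR_{2,1}(F)$. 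Finally one assembles these block-by-block bijections into global bijections $\CR_{2k+2}(F)\leftrightarrow\CN_{2k+2}(F)$ and $\CR_{2k+1}(F)\leftrightarrow\CN_{2k+1}(F)$, and separately $\CR_{2,1}(F)\cup\CR_{2,2}(F)\leftrightarrow\CN_{2,1}(F)\cup\CN_{2,2}(F)$ (with the factor-of-two bookkeeping in the definition of $\xi_0$ matching the pairing $\{p,\sigma(p)\}$), which yields the stated identities.

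\textbf{Expected obstacle.} The routine part is the genus count, which is essentially Lemma \ref{lem3.11}. The delicate part is the combinatorial type-matching: one must verify that the hyperelliptic involution $\sigma$ on $\tilde S$ acts on the block $F_p$ in precisely the way that makes the two attachment points to $B$ either $\sigma$-conjugate (a nodal pair) or $\sigma$-fixed, and that the genus splitting of $F$ at those points reads $(k,g-k-1)$ or $(k,g-k)$ as claimed — this requires tracking how the $2g+2$ branch points of the hyperelliptic cover distribute over the components of the image tree of rational curves, and checking the arithmetic that the residual genus is what the definitions of type $(0,k)$ and type $k$ demand. A secondary subtlety is the edge case when $\Gamma$ itself is contracted or when a block meets $B'$ and $B''$ separately (the situation of Lemma \ref{nodelem}(2)), where one must confirm the node-type bookkeeping still goes through; and one should double-check that the ordinary singularities of odd order which are \emph{not} of type $(k\to k)$ (and the negligible singularities) contribute only to $s_2$ via ramification, consistently with their producing inseparable nodes. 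I expect the write-up to proceed by a case analysis on $\ord_p(R)$ and on whether $E_p\subseteq\tilde R$, mirroring the cases in the proof of Lemma \ref{lem3.11}.
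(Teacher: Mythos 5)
Your overall strategy coincides with the paper's: reduce via Lemma \ref{tu} and Corollary \ref{g+2=0} to the case $\Gamma\not\subseteq R$, decompose $F$ into blocks, compute $p_a(F_p)=\big[\big((E_p,R)_p-1\big)/2\big]$ by Lemma \ref{lem3.11}, and match the sets $\CR_*(F)$ with the sets $\CN_*(F)$. However, the part you set aside as the ``expected obstacle'' is exactly where the paper's proof does its real work, and your plan does not supply the idea that makes it go through: the converse direction of each correspondence. For $k\geq1$ the paper argues that a node $q$ of type $k$ is an isolated fixed point of the involution $\sigma$, so its inverse image under $\rho$ is a vertical $(-1)$-curve of $\tilde S$, whose image in $\tilde P$ is a $(-2)$-curve contained in $\tilde R$; by Lemma \ref{1.3} together with Lemma \ref{tu}(4) this curve comes from a singularity $p$ of type $(2k'+1\to 2k'+1)$, and the genus computation of the block (Lemma \ref{lem3.11}, with $\tilde\theta^*(\CE_p)=\rho^*(F_q)$) forces $k'=k$. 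Similarly, a nodal pair $\{q,\sigma(q)\}$ of type $(0,k)$ maps to a single point $p$ of $\tilde P$ lying on components not contained in $\tilde R$, the block through $p$ satisfies $R\CE_p=2k+2$, and hence $p\in\CR_{2k+2}(F)$. Your alternative route (tracking how the $2g+2$ branch points distribute over the tree of rational curves) is only announced, not carried out, so surjectivity and injectivity of your maps — i.e.\ that every node of type $k$ and every nodal pair of type $(0,k)$ really arises from a singularity of the predicted order and type — remain unproved.

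There is also a concrete misstep in the $s_2(F)=\xi_0(F)$ part. A smooth point of $R$ with $(\Gamma,R)_p=2$ always yields a node fixed by $\sigma$ (the unique point of the double cover over $p\in R$), hence an element of $\CN_{2,1}(F)$, regardless of whether $F$ is irreducible: in case (2) of Lemma \ref{nodelem} the $k_1$ intersection points of $B'$ and $B''$ are still $\sigma$-fixed. Such points never produce elements of $\CN_{2,2}(F)$. The nodal pairs in $\CN_{2,2}(F)$ come instead from the negligible ordinary double points of $\hat R_p$, i.e.\ from $\CR_{2,2}(F)$, and the factor $2$ in the definition of $\xi_0$ is matched by $m_p(m_p-1)=2$ in (\ref{s2eqn}). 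Finally, your plan never addresses the term $-2|\CR_{2,-}(F)|$ in the definition of $s_2$: one must check, as the paper does, that for a semistable fiber every vertical component of $\hat R$ is an isolated $(-2)$-curve, so that $\hat R_p$ is the strict transform of $R$ and $\CR_{2,-}(F)=\emptyset$.
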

\begin{proof}

(1). Proof of $s_{2k+1}(F)=\delta_k(F),~k\geq1$.

We define a bijective map
\begin{equation}
\alpha_{2k+1}: \mathcal {R}_{2k+1}(F)\to \mathcal {N}_{2k+1}(F)
\end{equation}
between sets as follows:

If $p\in \CR_{2k+1}(F)$, then $E_p$ (the exceptional curve through
$p$) is not contained in $\tilde R$. Let
$\tilde\Gamma=\CE_p^c+\CE_p$, where $\CE_p$ is the block of
$\tilde\Gamma$ from $p$. Then the decomposition of $F$ is
$F=F_p^c+F_p$, where $p_a(F_p)=\Big[\big((R,E_p)_p-1\big)/2\Big]=k$,
and $F_p^c$ intersects with $F_p^c$ at a point, say $q$, which is a
node of type $k$. We define $\alpha_{2k+1}(p)=q\in\CN_{2k+1}(F)$, then
$\alpha_{2k+1}$ is well-defined.

 On the other hand, if $q\in\CN_{2k+1}(F)$, then $F$ consists of a
 genus $k$ curve $F_q$ and a genus $g-k$
curve $F_q^c$, and $F_q$ meets $F_q^c$ at $q$ transversely. Then $q$
is an isolated fixed point of the hyperelliptic involution $\sigma$.
Thus the inverse image of $q$ in $\tilde F$ under $\rho:\tilde S\to
S$ is a $(-1)$-curve $B$. Hence $\tilde\theta(B)$ is a $(-2)$-curve
contained in $\tilde R$, which is from a singularity, say $p$, of
type $(2k'+1\to 2k'+1)$ (cf. Lemma \ref{1.3} and (4) in Lemma
\ref{tu}). Since $\tilde\theta^*(\CE_{p})=\rho^*(F_q)$, the
arithmetic genus of the block of $F$ from $p$ is
\begin{equation*}
k'=p_a(\tilde\theta^*(\CE_{p}))=p_a(F_q)=k.
\end{equation*}
Thus $p\in\CR_{2k+1}(F)$, and $\alpha_{2k+1}(p)=q$. Hence it is
clear that $\alpha_{2k+1}$ is surjective and injective.

Therefore,
$$s_{2k+1}(F)=|\CR_{2k+1}(F)|=|\CN_{2k+1}(F)|=\delta_k(F).$$

(2). Similar proof of $s_{2k+2}(F)=\xi_k(F),~k\geq1$.

We define a bijective map
\begin{equation}
\alpha_{2k+2}: \mathcal {R}_{2k+2}(F)\to \mathcal {N}_{2k+2}(F)
\end{equation}
between sets as follows:

If $p\in\CR_{2k+2}(F)$, then $E_p$ is not contained in $\tilde R$,
and the exceptional curve $E_1$ of the blowing-up at $p$ is not in
$\tilde R$ either. Hence $\tilde\theta^{-1}(p)$ consists of two
points $q,\sigma(q)$.   Let $\tilde\Gamma=\CE_p+\CE_p^c$, then
$F=F_q+F_q^c$, $p_a(F_q)=k$ and $F_q$ meets $F_q^c$ at $q,\sigma(q)$
transversely. So the nodal pair $\{q,\sigma(q)\}\in\CN_{2k+2}(F)$.
Hence we are able to define $\alpha_{2k+1}(p)=\{q,\sigma(q)\}$.

On the other hand, if $\{q,\sigma(q)\}\in\CN_{2k+2}(F)$, then
$F=F_q+F_q^c$, where $p_a(F_q)=k$, and they intersect with each
other at two points $q,\sigma(q)$ transversely. We may assume that
$\tilde F=F_q+F_q^c$, which meet at $q$ and $\sigma(q)$. Then
$\tilde\theta(q)=\tilde\theta(\sigma(q))$, say $p$, is an
intersection point of two curves not in $\tilde R$. Hence we can
decompose $\tilde \Gamma$ as $\tilde \Gamma=\CE_p+\CE_p^c$, where
$R\CE_p=2k+2$ and $\CE_p$ meets $\CE_p^c$ at $p$ only. The curve
$\CE_p$ is from a singularity of order $\ord_p(R)=R\CE_q=2k+2$.
Therefore, $p$ is the inverse image of $\{q,\sigma(q)\}$ under
$\alpha_{2k+2}$, and $\alpha_{2k+2}$ is bijective.

So
\begin{equation*}
s_{2k+2}(F)=|\CR_{2k+2}(F)|=|\CN_{2k+2}(F)|=\xi_k(F).
\end{equation*}

(3). Proof of $s_2(F)=\xi_0(F)$.

If $E$ is a vertical components of $\hat R$, then
$B=\tilde\theta^*(E)$ is a multiple component of $\tilde F$, so $B$
is a $(-1)$-curve for $F$ is semistable, and then we know that $E$
is a $(-2)$-curve in $\hat R$.  Hence $\hat R_p$ is the strict
transform of $R$ in $\hat P$, and $|\CR_{2,-}(F)|=0$.

If $p\in\CR_{2,1}(F)$, then $p$ is a smooth point of $R$,
$(R,E_p)_p=2,r_p(R)=1$, and $\tilde\theta^{-1}(p)$ is an
$\alpha$-type node $q$. Conversely, each $\alpha$-type node $q$ is a
singularity $p$ of type $A_1$ whose local equation is $t+x^2=0$. So
we get a bijective map
\begin{equation}
\alpha_{2,1}:\CR_{2,1}(F)\to\CN_{2,1}(F).
\end{equation}

If $p\in\CR_{2,2}(F)$, then $p$ is an ordinary double point, and
$r_p(R)=2$. By the same discussion in (2), we can obtain a bijective
map
\begin{equation}
\alpha_{2,2}:\CR_{2,2}(F)\to\CN_{2,2}(F),
\end{equation}

Hence,  we have that
\begin{equation*}
s_2(F)=|\CR_{2,1}(F)|+2|\CR_{2,2}(F)|=|\CN_{2,1}(F)|+2|\CN_{2,2}(F)|=\xi_0(F).
\end{equation*}

\end{proof}

Theorem \ref{mainthm} is a corollary of the above theorem.

\begin{remark}
From the Corollary \ref{coroXiao} and Theorem \ref{mainthm}, we know
that a family $f:S\to C$ of hyperelliptic semistable curves with
lowest slope if and only if the image $[f]$ of $f$ by the moduli map
$J$ intersects with $\Xi_0$ only, and $f$ with highest slope if and
only if $[f]$ intersects with $\Delta_{[g/2]}$ only. See \cite{LT13}
for families with highest slope.
\end{remark}

\begin{example}[Continuation of Example \ref{example}]
From the analysis of the blocks of $F$ in Example \ref{example}, we
can easy to know that
\begin{equation}
p_a(F_{p_1})=1,~p_a(F_{p_2})=1,~p_a(F_{p_3})=1,
\end{equation}
and the sets of nodes are
\begin{equation}
\begin{split}
&\CN_{2,1}(F)=\{q_0,q_{23},q_{24}\},
~\CN_{2,2}(F)=\{(q_{11},q_{12})\},\\
&\CN_{3}(F)=\{q_3\},~\CN_4(F)=\{(q_{21},q_{22})\}.
\end{split}
\end{equation}
Hence the numbers of nodes on $F$ are
\begin{equation}
\begin{split}
&\big(\xi_0(F),\xi_1(F),\xi_2(F)\big)=(5,1,0),\\
&\big(\delta_1(F),\delta_2(F)\big)=(1,0).
\end{split}
\end{equation}
Comparing these two equations with (\ref{eqr}) and (\ref{eqs}), we
give an example for the above theorem.
\end{example}

{\bf Acknowledgement.} I would like to thank Professor Shengli Tan and Professor Jun Lu
for their interesting and fruitful discussions.   Thanks to Fei Ye
for reading the preliminary version of this paper and making many
valuable suggestions and discussions. Finally, I wish to thank the referees for their suggestions, which greatly improved the exposition of this paper.

\small

\clearpage

\end{document}